\begin{document}
\baselineskip 16pt

\newtheorem{theorem}{Theorem}[section]
\newtheorem{lemma}[theorem]{Lemma}
\newtheorem{claim}[theorem]{Claim}
\newtheorem{prop}[theorem]{Proposition}
\newtheorem{conjecture}{Conjecture}
\newtheorem{example}{Example}
\newtheorem{fact}[theorem]{Fact}
\newtheorem{remark}{Remark}
\newtheorem{corollary}[theorem]{Corollary}
\newtheorem{construction}[theorem]{Construction}
\newtheorem{proposition}{Proposition}
\newtheorem{question}{Question}

\setlength{\unitlength}{11pt}
\renewcommand{\labelenumi}{(\theenumi)}

\def\blue#1{\textcolor{blue}{#1}}
\def\red#1{\textcolor{red}{#1}}

\title{\bf\vspace{-1.5cm}  On incidence choosability of cubic graphs\footnote{This research supported by Basic Science Research Program through the National Research Foundation of Korea (NRF) funded by the Ministry of Science, ICT and Future Planning (NRF-2018R1C1B6003577).}}
\author{Sungsik Kang and Boram Park\footnote{Corresponding author: borampark@ajou.ac.kr} \\[1.5ex]
\small Department of Mathematics, Ajou University, Suwon 16499, Republic of Korea}

\maketitle
\date

\begin{abstract} \normalsize
An \textit{incidence} of a graph $G$ is a pair $(u,e)$ where $u$ is a vertex of $G$ and $e$ is an edge of $G$ incident with $u$. Two incidences $(u,e)$ and $(v,f)$ of $G$ are \textit{adjacent} whenever (i) $u=v$, or (ii) $e=f$, or (iii) $uv=e$ or $uv=f$. An \textit{incidence $k$-coloring} of $G$ is a mapping from the set of incidences of $G$ to a set of $k$ colors such that every two adjacent incidences receive distinct colors. The notion of incidence coloring has been introduced by Brualdi and Quinn Massey (1993) from a relation to strong edge coloring, and since then, attracted by many authors.

On a list version of incidence coloring, it was shown by Benmedjdoub et. al. (2017) that every Hamiltonian cubic graph is incidence 6-choosable. In this paper, we show that every cubic (loopless) multigraph is incidence 6-choosable.
As a direct consequence, it implies that the list strong chromatic index of a $(2,3)$-bipartite graph is at most 6, where a (2,3)-bipartite graph is a bipartite graph such that one partite set has maximum degree at most 2 and the other partite set has maximum degree at most 3.
\end{abstract}

\section{Introduction}\label{sec:Intro}
In this paper, we always consider a loopless graph.
For a graph allowed to have a multiple edge, we call it a \textit{multigraph}.
In addition, we call a simple graph, a graph without multiple edge, just a \textit{graph}.

An \textit{incidence} of a multigraph $G$ is a pair $(u,e)$ of a vertex $u$ and an edge $e$ such that $u$ is an endpoint of $e$. In this case, we say $(u,e)$ is an incidence on the edge $e$.
We denote by $I(G)$ the set of all incidences of a multigraph $G$.
Two incidences $(u,e)$ and $(v,f)$ are \textit{adjacent} if one of the following holds: (i) $u=v$; (ii) $e=f$; (iii) $uv=e$ or $uv=f$. See Figure~\ref{fig:def}.
\begin{figure}
  \centering
  \includegraphics[width=14cm]{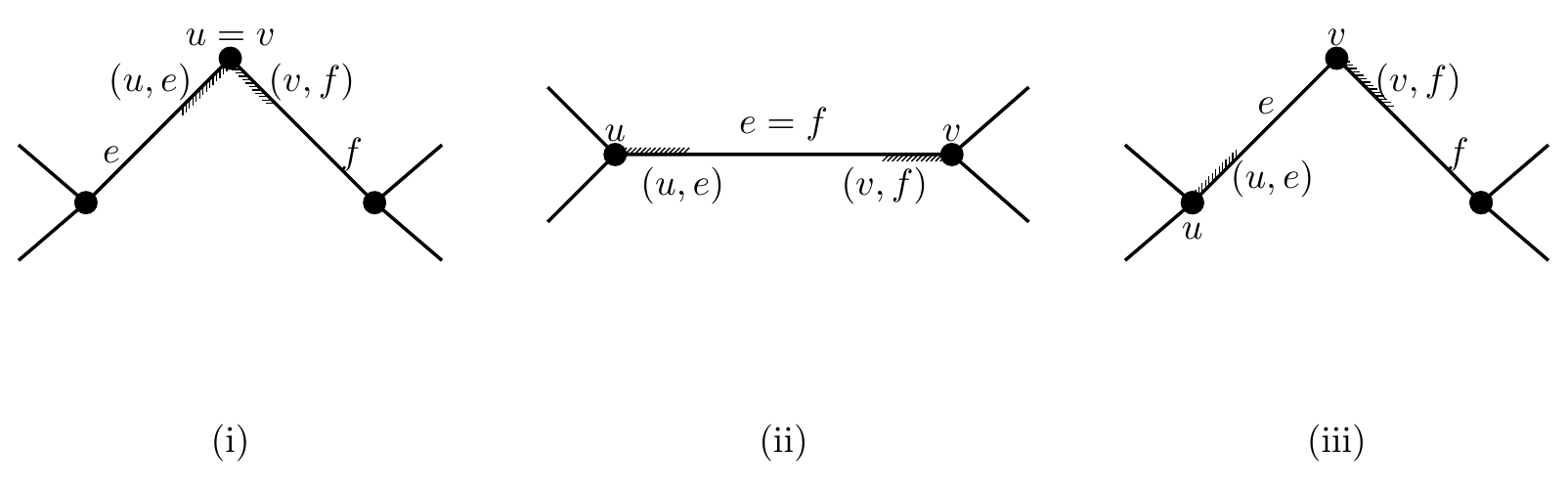}\\
  \caption{Illustrations of adjacent incidences. The shaded parts show two adjacent incidences.}\label{fig:def}
\end{figure}
An \textit{incidence coloring} of a multigraph $G$ is a function $\varphi$ from $I(G)$ to the set of colors so that $\varphi(u,e)\neq \varphi(v,f)$ for any two adjacent incidences $(u,e)$ and $(v,f)$ of $G$.
An \textit{incidence $k$-coloring} of a multigraph $G$ means an incidence coloring of $G$ using at most $k$ colors, and
the \textit{incidence chromatic number} $\chi_i(G)$ of  $G$ is the smallest integer $k$ such that $G$ admits an incidence $k$-coloring.

The notion of incidence coloring was introduced by Brualdi and Quinn Massey~\cite{BQ1993}, and has attracted by interesting questions in a relationship with another coloring problems.
For example, an incidence coloring is interpreted as a special case of directed star arboricity problem, introduced by Algor and Alon in \cite{AA1989} (see \cite{Gu1997} for more details).
In addition, the incidence chromatic number of a graph $G$ gives a lower bound for $\chi(G^2)$, where $G^2$ is the square of $G$ (see \cite{Wu2009}).

It is worthy to see a relation to strong edge coloring, since the notion of incidence coloring was motivated by strong edge coloring, in its introduction.
A \textit{strong edge coloring} of a graph $G$ is an edge coloring such that each color class is an induced matching, and the \textit{strong chromatic index} $\chi'_s(G)$ is the smallest integer $k$ such that $G$ allows a strong edge coloring using $k$ colors. There are intensive research on this topic started from one famous conjecture of Erd\H{o}s and Ne\v{s}et\v{r}il (see \cite{FSGT1990}).
In~\cite{BQ1993}, it was known that an incidence coloring of a multigraph $G$ corresponds to a strong edge coloring of
the \textit{subdivision} $S(G)$ of $G$, where $S(G)$ is the graph obtained from $G$ by subdividing every edge of $G$ (see Figure~\ref{fig:relation}). More explicitly, $\chi_i(G)=\chi'_s(S(G))$ for any multigraph $G$.

Note that the subdivision of a multigraph with maximum degree $\Delta$ is a $(2,\Delta)$-bipartite graph, where $(a,b)$-bipartite graph $G$ means a bipartite graph with a bipartition $(A,B)$ such that $\max\{\deg_G(v)\mid v\in A\} \le a$ and $\max\{\deg_G(v)\mid v\in B\} \le b$.
In this context, Brualdi and Quinn Massey suggested the following Conjecture~\ref{conj:bipartite}, which is a refinement of a conjecture by Faudree, Gy$\acute{\text{a}}$rf$\acute{\text{a}}$s, Schelp and Tuza in \cite{FSGT1990}:
\begin{conjecture}[\cite{BQ1993}]\label{conj:bipartite}
For every $(a,b)$-bipartite graph $G$, $\chi'_s(G)\le ab$.
\end{conjecture}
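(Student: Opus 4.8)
The statement to establish is that every $(a,b)$-bipartite graph $G$, with bipartition $(A,B)$ and $\deg_G(v)\le a$ for $v\in A$ and $\deg_G(v)\le b$ for $v\in B$, satisfies $\chi'_s(G)\le ab$. Since a strong edge colouring is exactly a proper colouring of the square of the line graph, the task is to assign each edge a colour from a palette of size $ab$ so that any two edges sharing an endpoint, or joined by a third edge, receive distinct colours. As a convenient first step I would normalise the instance by embedding $G$ into an $(a,b)$-biregular bipartite graph: adding vertices and edges only enlarges the distance-$\le 1$ conflict relation, so a strong edge colouring of the larger graph restricts to one of $G$, and hence we may assume every vertex of $A$ has degree exactly $a$ and every vertex of $B$ degree exactly $b$.

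The natural candidate is a product palette $[a]\times[b]$, where $[k]=\{1,\dots,k\}$. I would seek two maps $\alpha\colon E(G)\to[a]$ and $\beta\colon E(G)\to[b]$ such that $\alpha$ is injective on the star of every vertex of $A$ (possible since those degrees are at most $a$) and $\beta$ is injective on the star of every vertex of $B$, and then colour each edge $e$ by the pair $(\alpha(e),\beta(e))$. With this colouring, two edges sharing a vertex of $A$ already differ in the first coordinate and two sharing a vertex of $B$ differ in the second, so every distance-$0$ conflict is automatically resolved using exactly $ab$ colours. The whole content of the conjecture is thereby pushed into the distance-$1$ conflicts: a pair of edges $uv$ and $xy$ with $u,x\in A$ and $v,y\in B$ all distinct, joined in $G$ by a single edge $uy$ or $xv$.

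The crux, and the step I expect to be the main obstacle, is coordinating $\alpha$ and $\beta$ across such $2$-chains. For the pair above, local injectivity controls $\alpha$ and $\beta$ only on individual stars: through the connecting edge $uy$ it gives $\alpha(uy)\ne\alpha(uv)$ and $\beta(uy)\ne\beta(xy)$, but nothing forces $(\alpha(uv),\beta(uv))\ne(\alpha(xy),\beta(xy))$. One therefore needs a \emph{global} choice of $\alpha$ and $\beta$ (for instance a Latin-square or group-labelled system on the stars, or an auxiliary orientation of $G$ driving the coordinate assignments) making $(\alpha,\beta)$ injective on the edge set of every path of length at most three, equivalently a proper colouring of the square of the line graph. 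This global coordination is precisely what a crude degree count cannot supply, since greedy colouring guarantees only about $2ab$ colours, and in full generality it is genuinely hard. A realistic route to partial progress is the case $a=2$: after normalisation every vertex of $A$ has degree exactly $2$, so $G$ is the subdivision $S(H)$ of a multigraph $H$ with $\Delta(H)\le b$, and by the identity $\chi'_s(S(H))=\chi_i(H)$ recalled in the introduction the $(2,b)$-case is equivalent to bounding $\chi_i(H)\le 2b$; the list-colouring techniques this paper develops are an instance of carrying out exactly that reduction, yielding the value $ab=6$ when $b=3$. Extending beyond $a=2$, where the incidence reformulation is unavailable, is where fundamentally new ideas would be required, and I expect the general conjecture to remain the principal difficulty.
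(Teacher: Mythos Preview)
The statement you are attempting to prove is \emph{Conjecture}~\ref{conj:bipartite}, not a theorem of the paper. The authors explicitly say ``The conjecture is still open'' and only cite partial results (the cases $a=2$ and $a=3$). There is therefore no ``paper's own proof'' to compare with: the paper does not claim to establish this statement, and the main result (Theorem~\ref{thm:main}) covers only the list version of the very special case $a=2$, $b=3$ via the incidence-colouring reformulation.

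Your write-up is honest about this: you reduce the question to coordinating two local colourings $\alpha$ and $\beta$ so that the pair $(\alpha,\beta)$ is injective on every path of length at most three, and then correctly observe that nothing in your argument supplies that global coordination. So what you have is a reformulation plus a heuristic, not a proof. The substantive gap is exactly the step you flag: producing $\alpha,\beta$ with the distance-$1$ property across arbitrary $(a,b)$-biregular graphs. No known technique does this in general, and your proposal does not add one. The reduction you note for $a=2$ (to $\chi_i(H)\le 2b$) is standard and is precisely how the paper handles the $(2,3)$ case, but that reduction does not extend to $a\ge 3$, as you say. In short, the proposal is not a valid proof of the conjecture, and none was expected, since the conjecture is open.
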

The conjecture is still open, and see \cite{N2008, NN2014, BF2015, BLV2016, HYZ2017} for some recent partial results. We remark that in \cite{N2008,HYZ2017}, it was shown that the conjecture holds if $a=2$ or $3$.
\begin{figure}
  \centering
  \includegraphics[width=10cm]{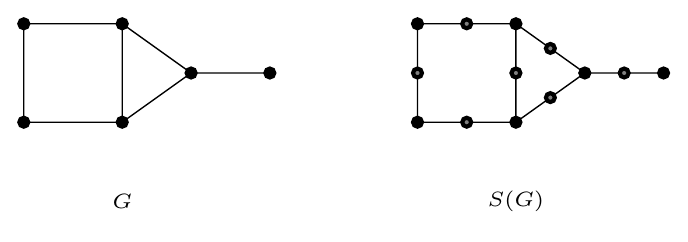}\\
  \caption{Graphs $G$ and $S(G)$}\label{fig:relation}
\end{figure}

It is easy to observe that for any connected graph $G$ with at least three vertices, $\Delta(G)+1\le \chi_i(G)\le 3\Delta(G)-2$.
In \cite{BQ1993}, the authors showed that
$\chi_i(G)\le 2\Delta(G)$, and conjectured that any graph $G$ satisfies that $\chi_i(G)\le \Delta(G)+2$. This conjecture was called the incidence coloring conjecture, but few years later, counterexamples were found in \cite{Gu1997}. Even though it turned out be false, it leads to several interesting questions about graphs satisfying the equality.
On the class of cubic graphs, in \cite{SLC2002}, it was shown that some several classes of cubic graphs are  incidence 5-colorable. In 2005, Maydanskiy \cite{M2005} showed that any cubic graph is incidence 5-colorable:
\begin{theorem}[\cite{M2005}]\label{thm:cubic}
For every cubic graph $G$, $\chi_i(G)\leq 5$.
\end{theorem}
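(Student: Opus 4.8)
The plan is to prove the stronger statement that \emph{every graph $G$ with $\Delta(G)\le 3$ has $\chi_i(G)\le 5$}, by induction on $|V(G)|+|E(G)|$; since a cubic graph has $\Delta=3$, this implies the theorem. If $\Delta(G)\le 2$ then the bound $\chi_i(G)\le 2\Delta(G)\le 4$ of Brualdi and Quinn Massey already suffices, and a disconnected $G$ is handled componentwise, so we may assume $G$ is connected with $\Delta(G)=3$. The first goal is to show that a minimal counterexample is $3$-regular and $2$-edge-connected (equivalently, bridgeless). A vertex $v$ of degree $1$ is easy to delete and put back: the incidence $(u,uv)$ on its edge sees at most four already-coloured incidences, and the incidence $(v,uv)$ then sees at most three. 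A vertex of degree $2$ and a bridge are also removable, but here the book-keeping is genuinely tighter --- after deleting the configuration, colouring the new half-edges together with the old ones at the attachment points reduces to a list-colouring instance on a $K_4$ minus an edge in which one of the large lists can shrink to size $3$, so the extension sometimes forces one to recolour a bounded number of incidences just outside the deleted part (or to first permute the five colours on one side of a bridge). Carrying this out shows that a minimal counterexample $G$ is connected, cubic and bridgeless, and one can push the same kind of argument to forbid other small edge-cuts.

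For the core of the proof I would isolate a finite list of short-cycle configurations --- a triangle, two triangles sharing an edge, a short cycle with a chord or carrying two edges to a common vertex outside it, a short path of degree-$3$ vertices with overlapping neighbourhoods, and similar local pictures --- and prove two things. First, \emph{unavoidability}: every bridgeless cubic graph contains at least one configuration on the list; this I would establish by a discharging argument (initial charge $\deg(v)-4<0$ at vertices forces one to bring in face charges in the planar case, or to argue structurally about short cycles and their mutual positions in general). Second, \emph{reducibility}: for each configuration, delete a well-chosen set of edges and suppress the resulting degree-$\le 2$ vertices, apply induction to the smaller subcubic graph, and then extend the colouring, allowing a bounded number of incidences in a neighbourhood of the deleted configuration to be recoloured simultaneously. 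It is worth keeping in mind the equivalent formulation (Guiduli) that an incidence $5$-colouring of $G$ is a partition of the arcs of the symmetric digraph $\overleftrightarrow{G}$ into five \emph{galaxies} (vertex-disjoint unions of out-stars), which makes the local recolouring bookkeeping more transparent; an alternative to the discharging route is to reduce to $3$-connected cubic graphs and use the fact that every such graph is built from $K_4$ by repeatedly subdividing two edges and joining the two new vertices, showing directly that this operation preserves incidence $5$-colourability, with $K_4$ as the base case.

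The main obstacle is the tightness of the bound. Since $5=\Delta+2$ but a single incidence of a cubic graph can be adjacent to seven others, there is never enough slack to extend a partial colouring greedily across an interior vertex, so \emph{every} reduction step --- even the removal of a degree-$2$ vertex or a bridge --- must be accompanied by a local recolouring, and verifying that such a recolouring always exists is an intricate case analysis. The real difficulty is to choose the list of reducible configurations so that each one is genuinely reducible with only a bounded recolouring \emph{and} the list is unavoidable; making the discharging (or the generation operation) mesh with these local recolourings is where the bulk of the work lies.
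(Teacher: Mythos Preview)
The paper does not contain a proof of this theorem: it is quoted from Maydanskiy~\cite{M2005} as a known result and used only as background. Consequently there is no ``paper's own proof'' to compare your proposal against. The only trace of Maydanskiy's argument that appears here is Lemma~\ref{lem:12decomp} (Proposition~3 of~\cite{M2005}), which suggests that his approach goes through $(1,2)$-decompositions (a perfect matching plus a $2$-factor) rather than through discharging or a generation theorem; the present paper's proof of the stronger list bound $ch_i(G)\le 6$ follows that same structural route.

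As for your outline itself: it is an honest sketch, and you flag its own gaps, but two of the proposed routes are more problematic than you indicate. Discharging in the form you describe (vertex charge $\deg(v)-4$, then ``bring in face charges'') is a planar device; for a general cubic graph there are no faces to discharge from, so unavoidability of your configuration list would need an entirely different argument. The alternative via a generation theorem for $3$-connected cubic graphs is closer to workable, but showing that the edge-insertion operation preserves incidence $5$-colourability is exactly the kind of tight local recolouring you warn about, and you have not supplied it. So the proposal is a plausible plan, not a proof, and it diverges from the approach Maydanskiy actually took.
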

Hence, every cubic graph has incidence chromatic number 4 or 5, and it is known to be NP-hard to determine if the incidence chromatic number of a cubic graph is 4 (see \cite{LP2008, JMM2017}).
See \cite{SS2008, SW2013, MGS2012, GLS2016, GLS2017, J2018} for some interesting families of graphs whose incidence chromatic numbers are known.

In this paper, we study a list version of incidence coloring.
For a multigraph $G$,
an \textit{incidence list assignment} $L$ of $\red{G}$ is a function defined on $I(G)$
such that $L$ associates each $(v,e)\in I(G)$ with a nonempty set $L(v,e)$.
If $|L(v,e)|=k$ for each $(v,e)\in I(G)$, then we say it is an \textit{incidence $k$-list assignment}.
When an incidence list assignment $L$ of a multigraph $G$ is given,
we say $G$ is \textit{incidence $L$-choosable}
if there is an incidence coloring $\varphi$ such that $\varphi(v,e)\in L(v,e)$ for each incidence $(v,e)$ of $G$, and in this case such $\varphi$ is called an \textit{incidence $L$-coloring}.
A multigraph $G$ is said to be incidence \textit{$k$-choosable} if $G$ is incidence $L$-choosable for any incidence $k$-list assignment. The \textit{incidence
choice number} $ch_i(G)$ of a multigraph $G$ is the smallest integer $k$ such that $G$ is incidence $k$-choosable.

Whereas the incidence chromatic number for a cubic graph is known to be at most 5 as in Theorem~\ref{thm:cubic}, the incidence choice number of a cubic graph is firstly studied in  \cite{BBS2017} most recently as follows.

\begin{theorem}[\cite{BBS2017}]\label{thm:main:Hamiltonian}
For every Hamiltonian cubic graph $G$, $ch_i(G)\leq 6$.
\end{theorem}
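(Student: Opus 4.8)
The plan is to use the standard decomposition of a cubic graph with a Hamilton cycle. Fix a Hamilton cycle $C=v_0v_1\cdots v_{n-1}v_0$ of $G$; the edges of $G$ outside $C$ form a perfect matching $M$, so $n$ is even. Write $e_i=v_iv_{i+1}$ for the cycle edges (subscripts modulo $n$) and $m_i$ for the matching edge at $v_i$, and name the three incidences at $v_i$ as $a_i=(v_i,e_{i-1})$, $b_i=(v_i,e_i)$ and $c_i=(v_i,m_i)$. My first step would be to record, straight from the definition of adjacency, exactly which incidences touch each of these: $a_i$ is adjacent precisely to $b_i,c_i,a_{i-1},b_{i-1},c_{i-1},a_{i+1},c_j$; $b_i$ precisely to $a_i,c_i,b_{i-1},a_{i+1},b_{i+1},c_{i+1},c_j$; and $c_i$ precisely to $a_i,b_i,b_{i-1},a_{i+1},c_j,a_j,b_j$, where $v_j$ denotes the $M$-mate of $v_i$. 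I would also note the one non-adjacency I need later: $b_{i-1}$ and $a_{i+1}$ are \emph{not} adjacent (their two defining edges are $e_{i-1}$ and $e_i$, neither of which is $v_{i-1}v_{i+1}$).

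Now fix an incidence $6$-list assignment $L$. The plan is to colour the incidences vertex by vertex, handling the three incidences at one $v_i$ per step, in the order $v_1,v_2,\dots,v_{n-1}$ and then, last of all, $v_0$. Because $a_i,b_i,c_i$ are pairwise adjacent, one step amounts to finding a rainbow system of representatives from the three ``surviving'' lists, i.e.\ $L(a_i),L(b_i),L(c_i)$ with the colours of already‑coloured neighbours deleted. Reading off the adjacency lists above, while $1\le i\le n-1$ the forward vertex $v_{i+1}$ has not yet been coloured, so the numbers of already‑coloured neighbours of $a_i,b_i,c_i$ are at most $4,2,4$ respectively; hence the surviving lists have sizes at least $2,4,2$. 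For three pairwise‑adjacent incidences this is more than enough: Hall's condition for a system of distinct representatives is immediate (any one list is nonempty, any two have union of size $\ge 2$, and the list of size $\ge 4$ makes the union of all three have size $\ge 3$). So the colouring extends, with room to spare, through all of $v_1,\dots,v_{n-1}$.

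The genuinely delicate point, and the one I expect to be the main obstacle, is the final vertex $v_0$, where the cycle closes up: there \emph{both} $v_1$ and $v_{n-1}$ are already coloured, so the surviving lists at $a_0,b_0,c_0$ are only guaranteed to have size $1$ — too small to extract a rainbow triple outright. The way around this is to spend the freedom that is available at the earlier steps so that the seam at $v_0$ becomes easy. Concretely, when colouring the triple at $v_{n-1}$ one should try to choose $\varphi(b_{n-1})\in\{\varphi(a_1),\varphi(c_{j_0})\}$, where $v_{j_0}$ is the $M$-mate of $v_0$; this is legitimate because $b_{n-1}$ is adjacent to neither $a_1$ nor $c_{j_0}$ and its surviving list has size $\ge 4$, and it collapses the set of colours forbidden to $a_0$ (originally coming from the five incidences $a_{n-1},b_{n-1},c_{n-1},a_1,c_{j_0}$) down to at most four. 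Playing the same game with the surviving list of $c_{n-1}$ (size $\ge 2$), by reusing $\varphi(c_{j_0})$ there, shrinks the forbidden set further; in the good case the surviving lists at $a_0,b_0,c_0$ end up with sizes $\ge 3,2,2$, which again satisfies Hall. One may additionally steer $\varphi(a_1)$ and $\varphi(c_{j_0})$, at the moments those get coloured, toward colours lying outside $L(a_0)$ (or $L(b_0)$, $L(c_0)$), since at those early steps the surviving lists are essentially the full size‑$6$ lists.

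The honest content of the proof is then a finite case analysis verifying that some such steering always succeeds: the awkward sub‑cases are when $\varphi(a_1)=\varphi(c_{j_0})$, when a colour one would like to reuse at $v_{n-1}$ does not lie in the relevant list, and small‑$n$ degeneracies; each is dispatched by the same accounting (several of the five incidences surrounding a given incidence at $v_0$ are forced to repeat a colour, or to miss the corresponding list). Everything preceding this final bookkeeping is routine greedy colouring powered by the ``sizes $\ge 2,4,2$'' estimate, so the crux of the whole argument is making the endgame at $v_0$ go through.
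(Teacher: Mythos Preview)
This theorem is quoted from \cite{BBS2017} and not reproved in the present paper; however, the paper imports the key technical ingredient of that proof as Lemma~\ref{lem:list} (Claim~3 of \cite{BBS2017}) and runs essentially the Hamiltonian argument as the $m=0$ case of Lemma~\ref{lem:one-cycle}, so a comparison is possible.

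Your decomposition (Hamilton cycle plus perfect matching) and your adjacency bookkeeping are correct, and the greedy estimate ``surviving list sizes at least $2,4,2$ away from the seam'' is exactly the routine part of the argument. The difference is in how the seam is closed. You try to close it at the \emph{end}, by forcing $\varphi(b_{n-1})$ (and $\varphi(c_{n-1})$) to reuse one of the already chosen colours $\varphi(a_1),\varphi(c_{j_0})$. In the list setting this is where the plan breaks: there is no reason for $\varphi(a_1)$ or $\varphi(c_{j_0})$ to lie in $L(b_{n-1})$ at all, and the surviving list at $c_{n-1}$ has size only~$2$, so the desired reuse can simply fail. Your fallback (``steer $\varphi(a_1)$ and $\varphi(c_{j_0})$ outside $L(a_0)$ at the moment they are coloured'') runs into the same obstruction one step earlier, and you then defer to ``a finite case analysis'' without indicating a case split that actually terminates. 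As written, this is the missing idea, not just missing detail: you need a mechanism guaranteeing that among the five external neighbours of each seam incidence, at most four colours occur \emph{in the relevant list}, and ad hoc reuse at $v_{n-1}$ does not provide that.

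The paper (following \cite{BBS2017}) closes the seam at the \emph{start}. One first picks three consecutive cycle vertices $v_0,v_1,v_2$ with $v_0v_2\notin E(G)$ and, before anything else, colours the five incidences on $e_{v_0}$, on $e_{v_1}$, and the single incidence $(v_2,v_1v_2)$ by Lemma~\ref{lem:list}. That lemma is precisely the case analysis you are gesturing at: it shows one can choose these five colours so that $\{\alpha,\delta\}$ meets $L(v_0,v_0v_1)$ in at most one element and $\{\beta,\gamma,\eta\}$ meets $L(v_1,v_0v_1)$ in at most one element. One then colours all remaining matching edges (each has an uncoloured incident edge, so Lemma~\ref{lem:sub} applies), and finally walks around the cycle so that the edge $v_0v_1$ is coloured last; at that moment each of $(v_0,v_0v_1)$ and $(v_1,v_0v_1)$ has all seven neighbours coloured, but condition~\eqref{condition:twoedges} forces two of those neighbours to contribute at most one forbidden colour from the list, leaving a colour available. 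The point is that the ``coincidence or miss the list'' dichotomy is engineered \emph{before} any other colours are fixed, when the full size~$6$ lists are still available; doing it at $v_{n-1}$, after almost everything is coloured, is too late.

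If you want to repair your outline, the cleanest fix is to replace the endgame steering by a direct appeal to Lemma~\ref{lem:list}: choose the five colours there first, then run your vertex-by-vertex sweep starting at $v_2$ and ending at $v_0,v_1$, with the edge $v_0v_1$ last.
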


In this paper, we generalize Theorem~\ref{thm:main:Hamiltonian}, as follows:

\begin{theorem}[Main result]\label{thm:main}
For a subcubic loopless multigraph $G$, $ch_i(G)\le 6$.
\end{theorem}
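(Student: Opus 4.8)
The plan is to proceed by induction on the number of edges of $G$, reducing to a minimal counterexample and extracting forbidden configurations via a discharging-style or direct degeneracy argument. Since every incidence $6$-list assignment restricted to a subgraph is again an incidence $6$-list assignment, we may assume $G$ is connected and $2$-edge-connected (bridges let us split the problem), and it suffices to handle the case where $G$ is exactly cubic: vertices of degree $1$ or $2$ give easy reductions, since an incidence on a low-degree vertex is adjacent to few other incidences and can be colored last by a greedy count. The heart of the matter is therefore a connected cubic loopless multigraph, and here the natural tool is a structural decomposition. I would invoke a spanning structure that generalizes the Hamiltonian cycle used in Theorem~\ref{thm:main:Hamiltonian}: by Petersen's theorem (or Vizing's edge-coloring theorem applied carefully to multigraphs), every bridgeless cubic multigraph has a perfect matching, hence decomposes into a perfect matching $M$ together with a $2$-factor $F$, where $F$ is a disjoint union of cycles. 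The idea is to color the incidences along each cycle of the $2$-factor in a cyclic sweep, then color the two incidences on each matching edge using the colors still available.

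The key steps, in order, are as follows. First, reduce to a connected bridgeless cubic multigraph $G$ with a fixed decomposition $E(G)=M\cup F$ into a perfect matching $M$ and a $2$-factor $F=C_1\cup\cdots\cup C_t$. Second, analyze the local adjacency structure: an incidence $(u,e)$ with $e\in F$ is adjacent to at most two incidences on $F$-edges at $u$, to the partner incidence on $e$, to the two $F$-incidences at the other endpoint of $e$, and to incidences on the matching edge at $u$ and at the other endpoint of $e$ — a bounded list that must be pinned down exactly to make the counting work (this is where the value $6$ is tight). Third, color the cycles: traverse each $C_i$ and, at each vertex, assign the "forward" and "backward" incidences along the cycle greedily, showing that a list of size $6$ always leaves a legal choice because the number of already-colored conflicting incidences never exceeds $5$; care is needed at short cycles (lengths $3$ and $4$) and at digons, which should be isolated as special base cases. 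Fourth, color the matching incidences: for each $uv\in M$, the two incidences $(u,uv)$ and $(v,uv)$ are each adjacent to at most four already-colored $F$-incidences plus each other, again $5$ constraints against a list of $6$, so a consistent choice exists; one orders the matching edges arbitrarily since distinct matching edges share no incidence-adjacency beyond what the $F$-coloring already controls.

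The main obstacle I expect is twofold. The combinatorial bookkeeping must be genuinely tight — with $\Delta=3$ one has potentially up to $7$ neighbors for a generic incidence, so the ordering of the coloring process must be engineered so that at the moment each incidence is colored, at most $5$ of its neighbors are already colored; getting a clean global order (cycle-by-cycle, then matching) that achieves this uniformly, including across the "seams" where a matching edge joins two different cycles, is the crux. The second difficulty is multigraph-specific: digons (double edges) and short cycles in the $2$-factor create denser local conflict patterns, and triangles force the four "wrap-around" incidences of a $3$-cycle to interact; these cases likely need to be pulled out and verified by hand, possibly by a small case analysis or by merging a short cycle with its incident matching edges into a single gadget that is colored atomically. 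If a clean $2$-factor decomposition with good properties is hard to guarantee for every bridgeless cubic multigraph, the fallback is to mimic \cite{BBS2017} more closely and first establish the result for cubic multigraphs admitting a specific near-Hamiltonian structure, then bootstrap to the general case through the bridge and low-degree reductions already set up in Step one.
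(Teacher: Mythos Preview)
Your high-level structure matches the paper --- reduce to simple cubic, take a $(1,2)$-decomposition $E(G)=M\cup F$ via Petersen, color along it --- but the proposed order ``cycles first, then matching'' fails, and the count you give for it is wrong. For $uv\in M$ with $F$-edges $e_1=uw_1$, $e_2=uw_2$ at $u$ and $f_1,f_2$ at $v$, the incidence $(u,uv)$ is adjacent to all six of $(u,e_1),(u,e_2),(w_1,e_1),(w_2,e_2),(v,f_1),(v,f_2)$, not four; together with $(v,uv)$ that makes seven neighbors. If every $F$-incidence is already colored, $(u,uv)$ may have no color left in a list of size~$6$, so your fourth step cannot be carried out.

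The paper therefore colors in the opposite order: matching edges first, then the cycles. A cycle incidence is adjacent to only three $M$-incidences, so a sweep along a cycle is fine \emph{until the last edge}, where the count genuinely reaches~$7$. Closing each cycle is the real crux and is not greedy: before the sweep the paper pre-selects colors on five specific incidences near the closing edge $v_0v_1$ (Lemma~\ref{lem:list}) so as to force $|L(v_0,v_0v_1)\cap\{\alpha,\delta\}|\le 1$ and $|L(v_1,v_0v_1)\cap\{\beta,\gamma,\eta\}|\le 1$, engineering two coincidences among the forbidden colors and bringing the count back down to~$5$. A second, smaller gap is the bridge reduction: ``bridges let us split the problem'' is not free for list coloring, since the two incidences on the bridge are adjacent to incidences on both sides. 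The paper proves a strengthened statement for each $2$-connected piece (Theorem~\ref{the:submain}) --- a designated pendent edge is \emph{freely} $L$-choosable, meaning any precoloring of its two incidences extends --- and this is what allows the pieces to be glued across the cut edges.
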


Note that the bound 6 is tight by considering the graph on two vertices with three multiple edges.

By a relation to strong edge coloring, we immediately obtain the following. The \textit{list strong chromatic index} $ch'_s(G)$ of a graph $G$ is the smallest integer $k$ such that for every list assignment $L$ on $E(G)$ satisfying that $|L(e)|\ge k$ for any $e\in E(G)$, there is a strong edge coloring $\varphi$ of $G$ so that $\varphi(e)\in L(e)$ for every edge $e$ of $G$.

\begin{theorem}[Equivalent to Theorem~\ref{thm:main}]\label{cor:main}
For every $(2,3)$-bipartite graph $G$, $ch'_s(G)\leq 6$.
\end{theorem}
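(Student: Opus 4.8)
The plan is to obtain Theorem~\ref{cor:main} directly from Theorem~\ref{thm:main} through the standard dictionary between incidence colorings of a multigraph and strong edge colorings of its subdivision. The key identity, which I would establish first, is that $ch_i(H)=ch'_s(S(H))$ for every loopless multigraph $H$ --- the list analogue of the equality $\chi_i(H)=\chi'_s(S(H))$ recalled in the introduction. To see it, write $w_e$ for the vertex subdividing an edge $e=uv$ of $H$, so that $(u,e)\mapsto uw_e$ is a bijection from $I(H)$ onto $E(S(H))$. A short case analysis shows that two incidences of $H$ are adjacent in the sense of this paper exactly when the corresponding two edges of $S(H)$ share an endpoint or are joined by an edge, i.e.\ exactly when they must receive distinct colors in a strong edge coloring of $S(H)$. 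Hence, transporting any incidence list assignment $L$ of $H$ along this bijection identifies incidence $L$-colorings of $H$ with strong edge $L$-colorings of $S(H)$, and $ch_i(H)=ch'_s(S(H))$ follows.

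Since $S(H)$ is $(2,3)$-bipartite whenever $H$ is subcubic --- its subdivision vertices have degree exactly $2$ and its original vertices have degree at most $3$ --- this already shows that Theorem~\ref{cor:main} implies Theorem~\ref{thm:main}. For the direction we actually need I would use two further facts. First, $ch'_s$ is monotone under subgraphs: if $G\subseteq G'$ then any two edges of $G$ that conflict in $G$ still conflict in $G'$, so the restriction to $E(G)$ of a strong edge $L$-coloring of $G'$ is a strong edge $L$-coloring of $G$, giving $ch'_s(G)\le ch'_s(G')$. Second, every $(2,3)$-bipartite graph is a subgraph of $S(H)$ for some subcubic loopless multigraph $H$.

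To prove the second fact, let $G$ be $(2,3)$-bipartite with parts $(A,B)$, $\deg_G(a)\le 2$ for $a\in A$ and $\deg_G(b)\le 3$ for $b\in B$. Delete the isolated vertices of $A$, and then for each $a\in A$ with $\deg_G(a)=1$ attach a brand new vertex to $B$ and join it to $a$; the resulting bipartite graph $\widetilde G$ is still simple, every vertex of its $A$-side has degree exactly $2$, and those two $B$-neighbours are distinct. Suppressing every vertex of the $A$-side now yields a multigraph $H$ with vertex set the enlarged $B$, in which each former $a$ with neighbours $b_1\ne b_2$ becomes an edge $b_1b_2$. Then $H$ is loopless (distinct neighbours), it may have parallel edges (two vertices of $A$ with the same neighbour pair), every original vertex of $B$ keeps its $G$-degree and every new vertex has degree $1$, so $H$ is subcubic; and $\widetilde G=S(H)$ by construction, hence $G\subseteq S(H)$. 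Combining the three ingredients, $ch'_s(G)\le ch'_s(S(H))=ch_i(H)\le 6$ by Theorem~\ref{thm:main}.

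I do not expect a real obstacle: all the weight of the result sits in Theorem~\ref{thm:main}, and everything above is a translation. The points that merit care are the handling of vertices of $A$ of degree $0$ or $1$ --- which is why we settle for the inclusion $G\subseteq S(H)$ rather than an equality --- and the bookkeeping in the case analysis establishing that the bijection $I(H)\to E(S(H))$ preserves the conflict relation, so that $\chi_i=\chi'_s\circ S$ indeed carries over to lists verbatim.
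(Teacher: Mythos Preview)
Your proposal is correct and follows essentially the same route as the paper: pad the degree-$1$ vertices of $A$ with new pendants in $B$, suppress $A$ to obtain a subcubic loopless multigraph $H$ with $S(H)=\widetilde G\supseteq G$, and invoke $ch_i(H)=ch'_s(S(H))$ together with Theorem~\ref{thm:main}. The paper's write-up is terser (it does not spell out the bijection $I(H)\to E(S(H))$ or the monotonicity of $ch'_s$, and it silently ignores isolated vertices in $A$), but the argument is the same.
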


\begin{proof} Let $G$ be a $(2,3)$-bipartite graph with bipartition $(A,B)$, say
$\max\{\deg_G(v)\mid v\in A\}\le 2$.
For every vertex of degree 1 in $A$,
we add a new (pendent) vertex $v'$ so that $vv'$ is an edge. If we let $G'$ be the resulting graph, then $G'$ is a (2,3)-bipartite graph so that every vertex of one partite set has degree exactly two.
Since $G$ is a subgraph of $G'$, it is sufficient to show that $ch'_s(G')\leq 6$.
Let $H$ be the multigraph such that  $S(H)=G'$, which is
obtained from $G'$ by suppressing every vertex in $A$.
Then $H$ is a loopless subcubic multigraph, and so $ch_i(H)=ch'_s(G')$.
Thus Theorem~\ref{thm:main} implies Theorem~\ref{cor:main}.
By a similar way, we can also
prove that Theorem~\ref{cor:main} implies Theorem~\ref{thm:main} here.
\end{proof}

Here are some  terminologies used in this paper.
A \textit{semicubic} graph is a graph such that each vertex has degree 1 or 3.
A $(1,2)$-\textit{decomposition} of a graph $G$ is a bipartition $(E_1,E_2)$ of the edge set $E(G)$ such that $E_1$ and $E_2$ induces a 1-factor and a 2-regular graph, respectively.
Note that any graph having a (1,2)-decomposition is semicubic.
We say an incidence $L$-coloring $\varphi$ of a subgraph of a multigraph $G$ is \textit{well-extended} if there is an incidence $L$-coloring $\varphi'$ of $G$ as an extension of $\varphi$.

The paper is organized as follows. Section~\ref{sec:main} outlines the proof of the main theorem first. Section~\ref{sec:prel} collects some known observations which will be used in our proof.
Finally Section~\ref{sec:proofs} completes the proof.

\section{Outline of the proof of Theorem~\ref{thm:main}}\label{sec:main}

We will prove the following two theorems, Theorems~\ref{thm:PM} and~\ref{the:submain} in Section~\ref{sec:proofs}.
\begin{theorem}\label{thm:PM}
For a 2-connected cubic graph $G$, $ch_i(G)\le 6$.
\end{theorem}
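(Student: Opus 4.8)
The plan is to push as much as possible onto the Hamiltonian case (Theorem~\ref{thm:main:Hamiltonian}) and to treat the genuinely non-Hamiltonian situation by an incremental extension argument along a $(1,2)$-decomposition. A $2$-connected cubic graph is bridgeless, so by Petersen's theorem it has a perfect matching $M$, and then $(M,\,E(G)\setminus M)$ is a $(1,2)$-decomposition of $G$ whose $2$-factor $F=E(G)\setminus M$ splits into cycles $C_1,\dots,C_k$. If $k=1$ then $F$ is a Hamiltonian cycle, so $G$ is a Hamiltonian cubic graph and $ch_i(G)\le 6$ by Theorem~\ref{thm:main:Hamiltonian}; in particular the smallest cases such as $K_4$ are covered, and we henceforth assume $k\ge 2$ (the Petersen graph shows this really occurs, since each of its $2$-factors consists of two $5$-cycles).

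So fix $k\ge 2$ and an incidence $6$-list assignment $L$. Contracting each $C_i$ to a vertex yields a multigraph $\mathcal C$ on $k$ vertices whose edge set is $M$; since $G$ is $2$-connected, $\mathcal C$ is connected and in fact bridgeless (a bridge of $\mathcal C$ would be a bridge of $G$). Fixing a spanning tree of $\mathcal C$, order the cycles as $C_1,\dots,C_k$ so that for each $i\ge 2$ some matching edge joins $V(C_i)$ to $V(C_1)\cup\cdots\cup V(C_{i-1})$, and construct the incidence $L$-coloring in $k$ rounds: in round $i$ colour exactly the incidences $(v,e)$ with $v\in V(C_i)$, namely the two cycle incidences and the one matching incidence at each vertex of $C_i$. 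In round $i$ the matching incidences are of three kinds: those on edges joining $C_i$ to an earlier cycle (the opposite incidence is already coloured, so one colour is forbidden on it and, by rule (iii), on the two cycle incidences at that vertex as well), those on edges joining $C_i$ to a later cycle (free now, but the chosen colour is what must be handed off), and those on chords of $C_i$ that lie in $M$ (handled inside the round). Thus each round amounts to an extension statement for incidence $L$-colourings of a single cycle carrying one pendant edge at each vertex, some of whose pendant incidences already carry a forbidden colour --- exactly the kind of well-extendedness lemma to be assembled from Section~\ref{sec:prel} and Theorem~\ref{the:submain}; alternatively one can iterate a local cycle-merging reduction that decreases $k$ by deleting a matching edge $uv$ joining two cycles and reconnecting the cycle-neighbours of $u$ and $v$, colouring the smaller graph first and lifting by recolouring only the $O(1)$ incidences created around $u$ and $v$.

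The crux is the well-extendedness of a single round (equivalently, the lift across the local reduction). A cubic graph's incidence graph has vertices of degree up to $7$, so with only $6$ colours a blind greedy order fails; for each round we must genuinely exhibit an ordering of the at most $3\,|V(C_i)|$ new incidences --- or a small recolouring --- that always leaves a colour available, paying particular attention to the vertices of $C_i$ that receive a matching edge from the already-coloured part and to the matching incidences whose far endpoint is still uncoloured, for the latter of which we also want to retain enough slack that later rounds do not become stuck. Quantifying which colour patterns on a cycle-with-pendants extend, and how much freedom to reserve on the not-yet-coloured matching incidences, is the main work; this is exactly where $2$-connectivity enters, through the existence of the perfect matching and hence of the $(1,2)$-decomposition that organizes the whole argument.
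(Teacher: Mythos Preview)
Your overall framework coincides with the paper's: apply Petersen's theorem to a $2$-connected cubic graph to obtain a $(1,2)$-decomposition $(E_1,E_2)$, organise the cycles of $G[E_2]$ into a tree structure, and then colour cycle by cycle. The paper packages this into Lemma~\ref{lem:one-cycle}, whose proof is precisely the ``round-by-round'' procedure you describe, carried out along a DFS-tree of $\mathcal C(G;E_2)$.

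The gap in your proposal is that you stop exactly where the real work begins. You yourself write that ``quantifying which colour patterns on a cycle-with-pendants extend \dots\ is the main work'', and neither of the two routes you suggest closes this. Invoking Theorem~\ref{the:submain} for a single round does not suffice: that theorem guarantees only that \emph{one} pendent edge is freely $L$-choosable, whereas in round $i$ (for $i\ge 2$) several matching edges from $C_i$ to earlier cycles already carry a colour at their far incidence, so you need extendability from a partial colouring prescribed on \emph{multiple} incidences simultaneously. Your alternative ``cycle-merging reduction'' is likewise only sketched; you would have to show that the lift across the reduction can always be performed with lists of size~$6$, and since each incidence of a cubic graph has up to $7$ neighbours this is not automatic.

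What the paper actually does (inside Lemma~\ref{lem:one-cycle}) is not a generic greedy argument but a carefully engineered one: on each pendent cycle a specific triple $(v_0,v_1,v_2)$ is chosen, and Lemma~\ref{lem:list} is used to precommit colours on the five incidences in $I_C$ so that the two ``bad'' incidences $(v_0,v_0v_1)$ and $(v_1,v_0v_1)$---the ones that will see $6$ and $7$ coloured neighbours at the end---are guaranteed to see at most five distinct colours among those neighbours. This precommitment, together with a specific linear order on the remaining incidences (Steps~1--5 in the proof of Lemma~\ref{lem:one-cycle}), is what makes the greedy extension succeed. Your proposal identifies the right skeleton but does not supply this mechanism, so as written it is a plan rather than a proof.
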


For a graph $G$  and an incidence list assignment $L$ of $G$,
an edge $e=uv$ of $G$ is said to be \textit{freely $L$-choosable} if for any two distinct colors $c_1\in L(u,e)$ and $c_2\in L(v,e)$,
there is an incidence $L$-coloring $\varphi$ such that $\varphi(u,e)=c_1$ and $\varphi(v,e)=c_2$.

\begin{theorem}\label{the:submain} Let $G$ be a semicubic graph with at least one pendent vertex, $L$ be an incidence $6$-list assignment of $G$. Suppose that the graph obtained from $G$ by deleting all pendent vertices of $G$ is $K_1$ or 2-connected.
Then every pendent edge  of $G$ is freely $L$-choosable.
  \end{theorem}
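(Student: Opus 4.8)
The plan is to prove Theorem~\ref{the:submain} by induction on $|V(G)|$, carrying along (by the same induction) a strengthened, ``reserve-a-few-colours'' version of Theorem~\ref{thm:PM} that asserts free-choosability of a prescribed edge of a $2$-connected cubic graph even after a bounded number of colours have been pre-deleted from the lists near its two ends; this is what the reduction will need when it lands on a $2$-connected cubic graph.

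\emph{Base case.} If the core $H$ of $G$ (the graph obtained from $G$ by deleting all pendent vertices) is $K_1$, then $G=K_{1,3}$, with centre $u$, pendent neighbours $v=v_1,v_2,v_3$ and edges $e_i=uv_i$. Given distinct $c_1\in L(u,e_1)$ and $c_2\in L(v_1,e_1)$, I would fix $\varphi(u,e_1)=c_1$, $\varphi(v_1,e_1)=c_2$ and then colour $(u,e_2),(u,e_3),(v_2,e_2),(v_3,e_3)$ in this order: each of these has at most three already-coloured neighbours when it is reached, so its $6$-list leaves a colour. The remaining small configurations with $|V(H)|\le 3$ (in particular $H=K_3$), and any configuration where the suppression below would create a multiple edge, I would dispatch by a similar direct check, working in the multigraph setting — which is legitimate since Theorem~\ref{thm:main} is about loopless multigraphs.

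\emph{Inductive step.} Now $H$ is $2$-connected with $|V(H)|\ge 4$. Let $uv$ be the prescribed pendent edge, $v$ pendent; then $\deg_H(u)=2$, say with distinct neighbours $x$ and $y$. The key observation is that fixing $\varphi(u,uv)=c_1$ and $\varphi(v,uv)=c_2$ is exactly equivalent to deleting the two incidences on $uv$ and then deleting $c_1$ from $L(u,ux),L(u,uy),L(x,xu),L(y,yu)$ and $c_2$ from $L(u,ux),L(u,uy)$. I would then pass to the smaller graph $G^{\ast}$ obtained from $G-v$ by suppressing the now degree-$2$ vertex $u$ (adding the edge $xy$). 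Its core is $H^{\ast}=H-u+xy$, which one checks is again $2$-connected; and if $v$ was the only pendent vertex of $G$, then $G^{\ast}=H^{\ast}$ is $2$-connected cubic. Apply the inductive hypothesis (respectively the strengthened form of Theorem~\ref{thm:PM}) to free-colour the edge $xy$, transfer the colours of all incidences not at $u$ verbatim, let $(x,xu)$ and $(y,yu)$ inherit the colours of $(x,xy)$ and $(y,xy)$, and then finish by colouring $(u,ux),(u,uy)$ and finally every incidence at a pendent vertex (each of which has at most three forbidden colours).

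\emph{The main obstacle} is the last transfer step. A priori the incidence $(u,ux)$ must avoid $c_1$, $c_2$, the three colours used at $x$ (on $(x,xy),(x,xa),(x,xb)$) and the colour of $(y,xy)$ — as many as six colours — so a naive transfer can leave it, and certainly $(u,uy)$ as well, with no legal colour. To make the induction go through, the statement fed to $G^{\ast}$ must be the stronger one: free-choosability of $xy$ for a list assignment on $G^{\ast}$ in which a few colours, chosen as a function of $\{c_1,c_2\}$, have been pre-removed from the lists at $x$, $y$, and their other neighbours, arranged precisely so that the two lifted incidences at $u$ are guaranteed two distinct legal colours. Designing that strengthened hypothesis — strong enough to push the induction, weak enough to remain provable — and verifying the counting in the transfer is the real content of the proof, and it is here (rather than in the base case) that lists of size $6$, as opposed to $5$, are genuinely needed.
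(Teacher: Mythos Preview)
Your proposal is not a proof but a plan with its central step missing. You yourself say that ``designing that strengthened hypothesis \ldots\ is the real content of the proof,'' and indeed it is: after the transfer, $(u,ux)$ must avoid $\{c_1,c_2,\varphi(x,xu),\varphi(y,yu),\varphi(x,xa),\varphi(x,xb)\}$ --- six colours from a $6$-list --- and must in addition differ from $(u,uy)$, which is in the same predicament. Free-choosability of $xy$ lets you control only $\varphi(x,xy)$ and $\varphi(y,xy)$, not the four colours at the other incidences around $x$ and $y$, so without the unspecified strengthening there is simply no room. There is a second, more structural problem: the edge $xy$ you want to free-colour in $G^\ast$ is \emph{not} a pendent edge of $G^\ast$, so Theorem~\ref{the:submain} as stated gives you nothing about it; your induction would have to carry free-choosability of an arbitrary prescribed edge, with list-deletions near its ends, through every step --- and at the bottom you would still need this strengthened statement for a $2$-connected cubic graph, which is strictly more than Theorem~\ref{thm:PM} and would require its own proof.

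The paper avoids all of this by going in the opposite direction: instead of shrinking $G$, it \emph{enlarges} it, pairing up the pendent vertices and attaching a fixed gadget between each pair so as to obtain an almost-cubic graph $G'$ with at most one pendent vertex whose deletion leaves a $2$-connected graph. By Lemma~\ref{lem:12decomp}, $G'$ has a $(1,2)$-decomposition, and the substantive work is Lemma~\ref{lem:one-cycle}: a carefully ordered greedy colouring along a DFS-ordered cycle-tree of the $2$-factor, with Lemma~\ref{lem:list} pre-selecting five incidences on each terminal cycle so that its last edge can always be completed. The gadget is built precisely so that the edge $e^\ast$ falls into one of three local configurations (cases (i)--(iii) of Lemma~\ref{lem:one-cycle}) that allow its two incidences to be coloured first, freely. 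There is no induction on $|V(G)|$ and no suppression/transfer step.
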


We also mention one fact in graph theory.

\begin{fact}\label{fact}
For a cubic graph $G$, $G$ has a cut-vertex if and only if it has a cut edge.
\end{fact}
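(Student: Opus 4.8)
The plan is to prove the two implications separately, in each case analysing how the three edges incident to a single vertex distribute among the pieces created by a deletion. Throughout I use that $G$ is simple and cubic, so that every vertex has exactly three distinct neighbours.

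\textbf{Cut edge $\Rightarrow$ cut vertex.} Suppose $e=uv$ is a cut edge and let $C_u,C_v$ be the components of $G-e$ containing $u$ and $v$ respectively. First I would note that, since $G$ is simple and cubic, the two edges at $v$ other than $e$ run to two distinct vertices of $C_v$, both different from $u$; hence $|V(C_v)|\ge 3$, and symmetrically $|V(C_u)|\ge 3$. Then I claim $u$ is a cut vertex of $G$: the only edge of $G$ joining $V(C_v)$ to $V(G)\setminus V(C_v)$ is $e$, which is incident to $u$, so deleting $u$ leaves $V(C_v)$ as a nonempty union of components of $G-u$ with no edge to the nonempty set $V(C_u)\setminus\{u\}$. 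Thus $G-u$ is disconnected.

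\textbf{Cut vertex $\Rightarrow$ cut edge.} Suppose $v$ is a cut vertex, and let $C_1,\dots,C_k$ (with $k\ge 2$) be the components of $G-v$. Assuming $G$ is connected, each $C_i$ is joined to $v$ by at least one edge, so the three edges at $v$ are split into $k\ge 2$ nonempty groups; since $3=1+2=1+1+1$, some component, say $C_1$, is joined to $v$ by exactly one edge $e=vw$ with $w\in V(C_1)$. Then $e$ is a cut edge: every path from $V(C_1)$ to $V(G)\setminus V(C_1)$ must pass through $v$, and the only $v$--$C_1$ edge is $e$, so in $G-e$ the nonempty set $V(C_1)$ is separated from the nonempty set $\{v\}\cup V(C_2)\cup\cdots\cup V(C_k)$.

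There is essentially no real obstacle here; the only point that needs a moment's care is, in the first implication, verifying that both sides of the deleted vertex $u$ remain nonempty, which is exactly where the hypotheses of cubicity and simplicity are used to force $|V(C_u)|,|V(C_v)|\ge 3$. It is also convenient to assume $G$ is connected throughout, since otherwise the statement is trivial.
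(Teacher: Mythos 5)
Your proof is correct. The paper states this as a ``Fact'' and supplies no proof at all, so there is no argument of the authors' to compare against; your two implications (the pigeonhole count $3=1+2=1+1+1$ at a cut vertex to extract a component met by exactly one edge, and the observation that both sides of a cut edge retain vertices after deleting an endpoint) are exactly the standard argument one would expect, and both directions check out. The only nitpick is the closing remark that the disconnected case is ``trivial'': it is not trivial so much as reducible, since one applies the same argument to the component containing the cut vertex or cut edge; in the paper's application $G$ is connected anyway, so nothing is lost.
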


Here, we give a proof of the main result, using Theorems~\ref{thm:PM} and ~\ref{the:submain}.

\begin{proof}[Proof of Theorem~\ref{thm:main}]
Let $G$ be a minimum counterexample with respect to the number of vertices.
Then there is an incidence $6$-list assignment of $G$ such that
$G$ is not incidence $L$-choosable. Note that $G$ is connected. First, we will show the following.

\begin{claim}\label{claim:min:counterex}
The graph $G$ is cubic and has no multiple edge.
\end{claim}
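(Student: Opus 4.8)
The plan is to rule out, one at a time, the structural defects a minimum counterexample could have: first small components and low-degree vertices, then multiple edges, then cut-edges (equivalently, by Fact~\ref{fact}, cut-vertices). Each time, I would find a proper subgraph to which either Theorem~\ref{thm:PM} or Theorem~\ref{the:submain} (or minimality of $G$) applies, color it, and then argue that the partial coloring extends to all of $G$ — contradicting the choice of $G$. Since $G$ is connected and $L$ is a $6$-list assignment, I first observe $G$ has at least four vertices: on at most three vertices a loopless subcubic multigraph is a subgraph of the triangle-with-doubled-edges, for which $6$ lists suffice by a direct greedy argument (each incidence has at most $5$ neighbours). So from now on $|V(G)|\ge 4$.

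Next I would show $\delta(G)\ge 3$, i.e. $G$ is cubic. Suppose some vertex $v$ has degree $1$ or $2$. If $\deg_G(v)=1$ with neighbour $u$ along the pendent edge $e=uv$, delete $v$; the graph $G-v$ is a connected loopless subcubic multigraph on fewer vertices, hence incidence $L$-choosable by minimality. Now $(u,e)$ is adjacent only to the (at most two) incidences at $u$ on the other edges of $G-v$ and to $(v,e)$, and $(v,e)$ is adjacent only to $(u,e)$; since the two lists have size $6$ we can choose $\varphi(u,e)$ avoiding the $\le 2$ colours already used at $u$ and then $\varphi(v,e)$ avoiding $\varphi(u,e)$. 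If $\deg_G(v)=2$, say $v$ is incident with $e_1=uv$ and $e_2=wv$ (possibly $u=w$), delete $v$ and proceed similarly: after colouring $G-v$, the four incidences $(u,e_1),(v,e_1),(v,e_2),(w,e_2)$ each see at most $5$ forbidden colours (the incidence $(v,e_i)$ sees the other three incidences on $e_1,e_2$ plus at most one neighbour colour through the edge $uv$ or $vw$), so a greedy ordering completes the extension. Hence $G$ is cubic.

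Now suppose $G$ has a multiple edge, say $e_1,e_2$ both joining $u$ and $v$, and let $e_3=ux$, $e_4=vy$ be the remaining edges at $u$ and $v$ (with $x,y\neq u,v$ since $G$ is loopless, and $x\neq y$ is possible but handled the same way). Here I would contract or delete the parallel pair to get a smaller loopless subcubic multigraph $G'$; the cleanest route is to delete $e_2$, leaving $G'=G-e_2$ in which $u,v$ now have degree $2$. By minimality $G'$ is incidence $L$-choosable (restricting $L$). It remains to colour the two incidences $(u,e_2)$ and $(v,e_2)$. The incidence $(u,e_2)$ is adjacent to $(u,e_1),(u,e_3),(v,e_1)$ and to $(v,e_2)$; symmetrically for $(v,e_2)$. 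Thus each of the two new incidences has at most $3+1=4$ forbidden colours from already-coloured incidences plus its partner — with lists of size $6$ we can greedily colour $(u,e_2)$ avoiding $\le 4$ colours and then $(v,e_2)$ avoiding $\le 5$. Since $\deg_{G'}(v)\le 2$ for the affected vertices, no other conflicts arise. This contradicts the non-choosability of $G$, so $G$ has no multiple edge.

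The step I expect to be the real obstacle is not any single one of these but making sure the bookkeeping of "how many forbidden colours does each new incidence see" is genuinely below $6$ in every sub-case — in particular the degree-$2$ deletion and the multiple-edge deletion create vertices where incidences on a common edge, incidences at a common vertex, and the subdivision-type adjacency (iii) all overlap, and one must check the worst case (e.g. when $x=y$, or when deleting a multiple edge creates a new pair of parallel edges) really does leave slack. If a naive greedy count ever hits $6$ exactly, the fix is to invoke Theorem~\ref{the:submain}'s free-choosability conclusion on a suitable pendent edge of an auxiliary semicubic graph obtained by truncating $G$ at the problematic vertex, which is exactly the mechanism that will be used for the cut-edge reduction in the rest of the proof; here I would only need it as a safety net. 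Once the claim is established, the outer proof continues by using Fact~\ref{fact} to reduce to the $2$-connected cubic case and applying Theorem~\ref{thm:PM}.
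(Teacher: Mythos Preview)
Your treatment of the ``$G$ is cubic'' part is fine and essentially matches the paper. The multiple-edge step, however, contains a genuine error in the adjacency count.

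When you delete one parallel edge $e_2$ and try to colour $(u,e_2)$ last, you list its neighbours as $(u,e_1),(u,e_3),(v,e_1),(v,e_2)$. But by condition~(iii) of the adjacency definition, $(u,e_2)$ is also adjacent to $(v,e_4)$ (because $uv=e_2$) and to $(x,e_3)$ (because $ux=e_3$). So $(u,e_2)$ has \emph{six} neighbours, five of which are already coloured in $G-e_2$; symmetrically $(v,e_2)$ has six neighbours. Colouring $(u,e_2)$ still works (one colour remains), but then $(v,e_2)$ sees all six of its neighbours coloured, and with a $6$-list you may be stuck. Your ``$3+1=4$'' should have been ``$5+1=6$'', and the greedy extension genuinely fails. (Incidentally, $G-e_2$ has the same number of vertices as $G$, so invoking minimality needs the extra observation that a minimum counterexample was already shown cubic; you should make that explicit.)

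The paper sidesteps this by deleting \emph{both vertices} $u$ and $v$ rather than just $e_2$. That leaves eight uncoloured incidences (four on $e_1,e_2$ plus two each on $uu_1,vv_1$), and the extension is not a plain greedy: one splits into cases according to whether the residual lists $L'(u,uu_1)$ and $L'(v,vv_1)$ share a colour, exploiting a common colour (or a colour outside $L'(u,e_1)$) to save one constraint before finishing greedily. Your fallback suggestion of invoking Theorem~\ref{the:submain} on an auxiliary pendent edge is too vague here: the hypothesis of that theorem requires the graph minus its pendent vertices to be $2$-connected, and there is no obvious way to manufacture such a configuration at the double edge that would let you prescribe both colours on $e_2$.
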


\begin{proof}[Proof of Claim~\ref{claim:min:counterex}]
Suppose that $G$ has a vertex $v$ of degree at most $2$.
By minimality,  $G-v$, which is the graph obtained from $G$ by deleting the vertex $v$, has an incidence $L$-coloring $\varphi$.
Let $e_1=vu_1$ and $e_2=vu_2$ be the edges incident to $v$. (Note that it may hold that $u_1=u_2$.)
Then the number of available colors at each of $L(v,e_1)$  and $L(v,e_2)$ is at least four
and the number of available colors at each of $L(u_1,e_1)$  and $L(u_2,e_2)$ is at least two.
By choosing colors from $L(u_1,e_1)$  and $L(u_2,e_2)$ first,
 it is easy to see that $\varphi$ is well-extended to an incidence $L$-coloring of $G$, which is a contradiction. Hence, $G$ is a cubic.

Next, we will show that $G$ is a simple graph. Suppose that there is a pair of vertices $u$ and $v$ having multiple edges.
If there are three multiple edges between $u$ and $v$, then $G$ has only two vertices $u$ and $v$ and three multiple edges, and so $|I(G)|=6$ and $G$ is incidence $6$-choosable.
Suppose that there are exactly two multiple edges $e_1$ and $e_2$ between $u$ and $v$.
Let $u_1$ and $v_1$ be the another neighbor of $u$ and $v$, respectively.
Let $G'=G-\{u,v\}$, which is the graph obtained from $G$ by deleting the vertices $u$ and $v$. By minimality of $G$, $G'$ has an incidence $L$-coloring $\varphi$.
In the following, for an incidence $(x,f)$ of $G$ not in $G'$,
we let $L'(x,f)$ be the set of available colors at that incidence under $\varphi$.
Note that  
for an incidence $(x,f)$ of $G$ not in $G'$, the following hold.
For every incidence $(x,f)$ on an edge joining $u$ and $v$,  $|L'(x,f)|\ge 6$.
For the other incidences,
if $u_1\neq v_1$, then
\[ |L'(u_1,uu_1)|, |L'(v_1,vv_1)|\ge 2, \quad |L'(u,uu_1)|,|L'(v,vv_1)|\ge 4,\]
and if  $u_1= v_1$, then
\[ |L'(u_1,uu_1)|, |L'(v_1,vv_1)|\ge 4, \quad |L'(u,uu_1)|,|L'(v,vv_1)|\ge 5.\]
See Figure~\ref{fig:counterex}.
\begin{figure}[h!]
\begin{center}
\includegraphics[width=10cm]{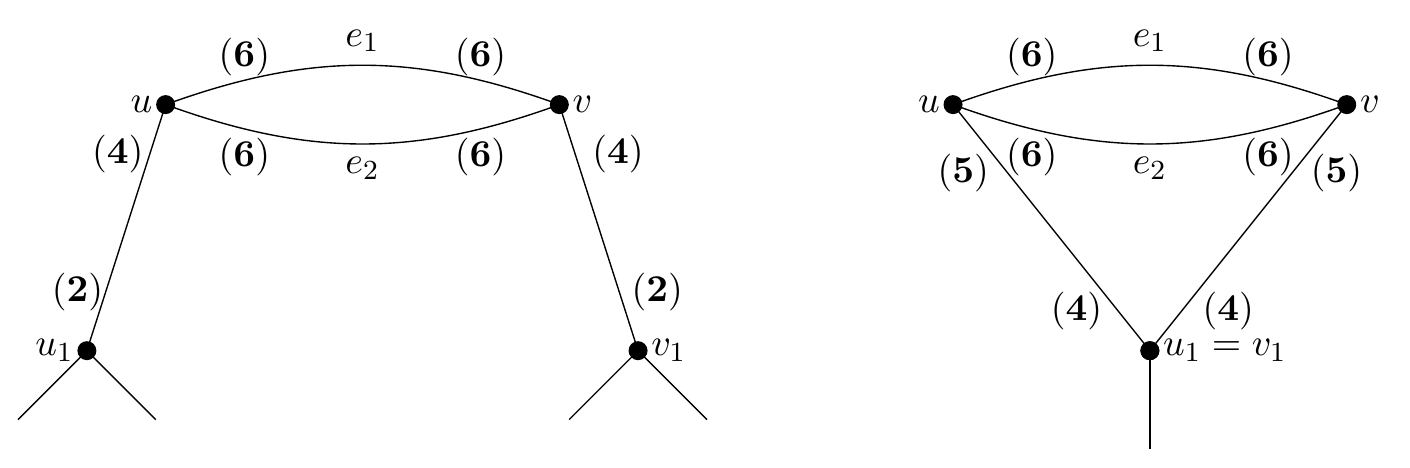}
\caption{An illustration for Claim~\ref{claim:min:counterex}. The bold number in the parenthesis on each incidence $(x,f)$ denotes a lower bound of $|L'(x,f)|$.}\label{fig:counterex}\end{center}
\end{figure}

\smallskip

\noindent Case (i) Suppose that there is a color $c\in L'(u,uu_1)\cap L'(v,vv_1)$. Then after giving the color $c$ to both $(u,uu_1)$ and $(v,vv_1)$, note that there are a sufficient number of available colors at each of $L'(u_1,uu_1)$  and $L'(v_1,vv_1)$.
More precisely, if $u_1\neq v_1$, then there is at least one available color in each  of $L'(u_1,uu_1)-\{c\}$ and  $L'(v_1,vv_1)-\{c\}$, and the colors for them may be same.
If $u_1=v_1$, then there are at least three available colors  in each  of $L'(u_1,uu_1)-\{c\}$ and  $L'(v_1,vv_1)-\{c\}$ and the colors for them should be distinct.
Then the number of the remaining incidences is  four, and
the number of available colors at each of them is also four.
Thus, we can extend $\varphi$ to an incidence $L$-coloring of $G$.

\smallskip

\noindent Case (ii) Suppose that  $L'(u,uu_1)\cap L'(v,vv_1)=\emptyset$.
Then $|L'(u,uu_1)\cup L'(v,vv_1)|\ge 8$, and so there is a coloring $c\in (L'(u,uu_1)\cup L'(v,vv_1))-L'(u,e_1)$. If $c\in L'(u,uu_1)$, then we give the color $c$ to $(u,uu_1)$.
If $c\in L'(v,vv_1)$, then we give the color $c$ to $(v,vv_1)$.
Then, there are a sufficient number of available colors at each of $L'(u_1,uu_1)$  and $L'(v_1,vv_1)$. If $u_1\neq v_1$, then there is at least one available color.
If $u_1=v_1$, then there are at least two available colors.
Then the number of available colors at $(u,e_1)$ is four, and those of the other remaining  incidences are at least three, and so we can extend $\varphi$ to an incidence $L$-coloring of $G$.

\smallskip

Hence, in any case, we can find an incidence $L$-coloring of $G$, which is a contradiction.
\end{proof}

By Claim~\ref{claim:min:counterex}, $G$ is simple and cubic.
If $G$ is 2-connected, then it holds by Theorem~\ref{thm:PM}.
Suppose that $G$ is not 2-connected. Then by Fact~\ref{fact}, $G$ has cut edges, and let $K$ be the set of all cut edges of $G$ (and so $K\neq \emptyset$). Let $G_1$, $\ldots$, $G_m$ be the components of $G-K$, which is the graph obtained from $G$ by deleting the edges in $K$. Note that each $G_i$ is $K_1$ or 2-connected. For each $i\in \{1,\ldots,m\}$, we let $G^*_i$ be the graph obtained from $G_i$ by adding all edges in $K$ incident to a vertex of $G_i$. See Figure~\ref{fig:outline}.
\begin{figure}[h!]
  \centering
\includegraphics[width=16cm]{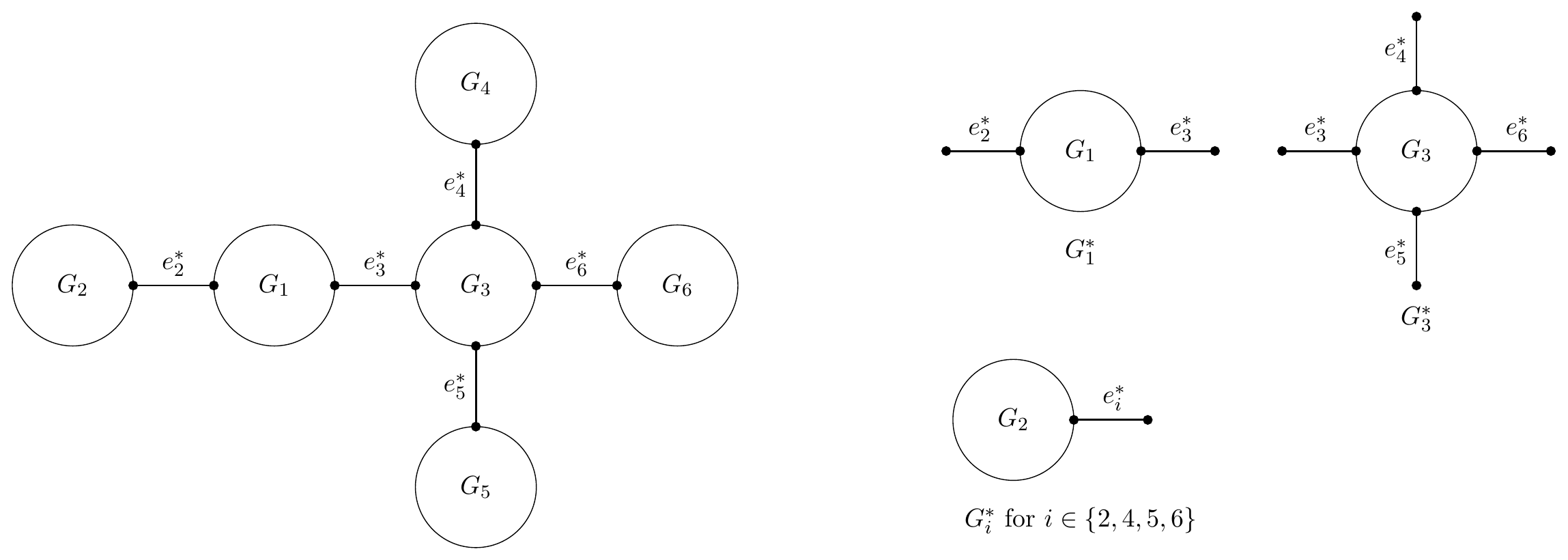}\\
  \caption{An illustration for graphs $G_i$ and $G_i^*$ in the proof of Theorem~\ref{thm:main}}\label{fig:outline}
\end{figure}

Let $\mathcal{G}$ be the graph whose vertices are $G_1,\ldots,G_m$, and $G_i$ and $G_j$ are adjacent if and only if there is a cut edge between a vertex of $G_i$ and a vertex of $G_j$.
Then it is easy to see that  $\mathcal{G}$ is a tree and the number of edges in $G$ (actually in $K$) between $G_i$ and $G_j$ is at most one.
Thus we can relabel the vertices of $\mathcal{G}$ so that for each $i\in\{2,\ldots,m\}$, $G_i$ is adjacent to exactly one $G_j$ for some $j\in \{1,\ldots, i-1\}$, and so we let $e^*_i$ be the edge of $G^*_i$ between $G_i$ and such $G_j$. See Figure~\ref{fig:outline} for an  illustration.

Note that each $G^*_i$ is semicubic at least one pendent edge, and
the graph   obtained from $G^*_i$ by deleting all pendent vertices    is $G_i$, which is $K_1$ or 2-connected. By Theorem~\ref{the:submain},  $G^*_1$ has an incidence $L$-coloring.
Suppose that $G^*_1 \cup \cdots \cup G^*_i$ has  an incidence $L$-coloring $\varphi$ for some $i\in \{1,\ldots,m\}$.
Then among the incidences of $G^*_{i+1}$,  only two incidences on the edge  $e^*_{i+1}$ are precolored under $\varphi$. By applying Theorem~\ref{the:submain} to $G^*_{i+1}$ and  $e^*_{i+1}$, it follows that $\varphi$ is well-extended to an incidence $L$-coloring  of $G^*_1 \cup \cdots \cup G^*_i\cup G^*_{i+1}$. Hence, by repeating the arguments, we obtain an incidence $L$-coloring  of $G$.
\end{proof}

\section{Preliminaries}\label{sec:prel}
We note that from now on, any graph always means a `simple' graph.
In this section, we collect some observations.
The first lemma easily follows from the fact that each incidence of a subcubic graph $G$ has at most 7 forbidden colors from its adjacent incidences.

\begin{lemma}\label{lem:sub}
Let $G$ be a subcubic graph, and $L$ be an incidence $6$-list assignment of $G$.
Let $\varphi$ be an incidence $L$-coloring of a subgraph $H$ of $G$.
If $e,e'\in E(G)\setminus E(H)$ and $e$ is incident to $e'$, then there is an extension $\varphi'$ of $\varphi$ so that $\varphi'$ is an incidence $L$-coloring of $H+e$. (See Figure~\ref{fig:lem}.)
\end{lemma}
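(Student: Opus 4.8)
The plan is to prove Lemma~\ref{lem:sub} by a direct counting argument on the number of forbidden colors. Since $G$ is subcubic, every incidence $(v,e)$ of $G$ is adjacent to at most $7$ other incidences: if $e=vw$ with $\deg(v)=\deg(w)=3$, then $(v,e)$ is adjacent to the $2$ other incidences at $v$, the $1$ incidence $(w,e)$, and the $2\times 2=4$ incidences of the form $(x,f)$ with $f$ incident to $w$ but $f\neq e$ — a total of $7$. (Fewer if the degrees are smaller.) Consequently, when we have already colored some incidences with $\varphi$ and wish to color one new incidence, at most $7$ colors are forbidden, but since the palette has $6$ colors this naive bound is not immediately enough; the point is that when we color the two incidences on $e$ and $e'$ \emph{one after another in the right order}, the relevant counts drop below $6$.

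First I would set up notation: write $e=uv$ and $e'$ incident to $e$, say $e'$ shares the endpoint $v$ (the case where $e'$ shares $u$ is symmetric), so $e'=vw$ for some $w\neq u$, or $e'$ and $e$ share a vertex in the configuration pictured in Figure~\ref{fig:lem}. The incidences of $H+e$ that are not already colored by $\varphi$ are exactly the incidences lying on edges of $E(H+e)\setminus E(H)$; since only $e$ is added, these are $(u,e)$ and $(v,e)$ (note $e'\notin E(H+e)$, so its incidences are irrelevant here — I should re-examine the statement, as the role of $e'$ is only to guarantee that $e$ is incident to \emph{some} edge outside $H$, which is automatically used to keep a count low). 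I would then color the two incidences $(u,e)$ and $(v,e)$ greedily: color $(v,e)$ first. The incidences adjacent to $(v,e)$ that are \emph{already colored} number at most $5$, because among the up-to-$7$ neighbors of $(v,e)$, the incidence $(u,e)$ is not yet colored, and at least one further neighboring incidence lies on the edge $e'\notin E(H)$, hence is also uncolored. So at most $5$ colors are forbidden and $L(v,e)$, having size $6$, admits a valid choice. Then color $(u,e)$: its already-colored neighbors number at most $5$ as well, since $(v,e)$ has just been colored (accounting for $1$) but at least two of its up-to-$7$ neighbors lie on $e'$-type edges outside $H$ — more carefully, one should check that after the first choice the forbidden set at $(u,e)$ still has size at most $5$, which holds because $(u,e)$ has at most $6$ neighbors other than $(v,e)$ and at least one of those is on an uncolored edge.

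The main obstacle I anticipate is bookkeeping the exact count of forbidden colors at the \emph{second} incidence $(u,e)$: after we have committed a color to $(v,e)$, we must be sure the forbidden set at $(u,e)$ does not reach $6$. The clean way around this is to observe that $(u,e)$ and $(v,e)$ are adjacent to each other, so the total number of \emph{distinct} incidences (other than these two themselves) that could forbid a color to either of them, across both, is at most $6+6-(\text{overlap})$, but what we actually need is just: the number of colored neighbors of $(u,e)$ is at most $\deg_G(u)-1 + \deg_G(v)-1 \le 2+2 = 4$ when we don't yet count $(v,e)$, plus $1$ for $(v,e)$ once colored, giving $\le 5$. Wait — the incidences at $v$ other than $(v,e)$ are at most $2$ and they are neighbors of $(u,e)$ via rule (i)? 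No: they are neighbors of $(u,e)$ only via rule (iii) since $uv=e$. So I must carefully enumerate: neighbors of $(u,e)$ are the $\le 2$ incidences $(u,f)$, $f\neq e$; the $\le 2$ incidences $(x,f)$ with $f$ an edge at $v$, $f\neq e$; and $(v,e)$. That is $\le 5$ total, all of which may be colored, so in the worst case the forbidden set has size $5$ and a color remains in $L(u,e)$. Similarly for $(v,e)$ before $(u,e)$ is colored we get $\le 5$ forbidden (the $\le 2$ at $v$, the $\le 2$ at the $e'$-end which includes an uncolored one so really $\le 3$, actually let me just bound by $5$ using $(u,e)$ uncolored), so both greedy choices succeed. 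I would present this as: color $(v,e)$ from the $\ge 1$ colors in $L(v,e)$ not forbidden, then color $(u,e)$ from the $\ge 1$ colors in $L(u,e)$ not forbidden, and conclude $\varphi'$ is the desired incidence $L$-coloring of $H+e$.
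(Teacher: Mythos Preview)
Your overall strategy---greedy coloring of the two incidences on $e$ with a forbidden-color count---is exactly what the paper's one-line justification has in mind. However, your execution contains a genuine error in the adjacency bookkeeping, and as a result the order you chose does not actually work.

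The problem is your enumeration of the neighbors of $(u,e)$. With $e=uv$, the incidences adjacent to $(u,e)$ are: the $\le 2$ incidences $(u,f)$ with $f\neq e$; the incidence $(v,e)$; the $\le 2$ incidences $(v,g)$ with $g\neq e$; \emph{and} the $\le 2$ incidences $(y,uy)$ for neighbors $y\neq v$ of $u$ (adjacent by rule~(iii), since $uy$ is the edge $f$ in the pair). You omit this last group in your final paragraph, which is why you arrive at ``$\le 5$ total''; the correct bound is $\le 7$. (Your opening count of $7$ is right only by accident: the ``$2\times 2$'' incidences $(x,f)$ with $f$ at $w$ include $(z,f)$ for $f=wz$, which are \emph{not} adjacent to $(v,e)$; you counted two non-neighbors and missed two genuine ones.)

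With the correct count, your order fails. If $e'=vw$ shares $v$ with $e$ and you color $(v,e)$ first, then when you turn to $(u,e)$ its colored neighbors can be: the $2$ incidences $(u,f)$, the now-colored $(v,e)$, the $1$ colored incidence $(v,g)$ with $g\neq e,e'$, and the $2$ incidences $(y,uy)$---a total of $6$, which can exhaust $L(u,e)$. The fix is simply to reverse the order: color $(u,e)$ first (the endpoint \emph{not} shared with $e'$). Then $(u,e)$ sees at most $2+0+1+2=5$ colored neighbors (since $(v,e)$ and $(v,e')$ are uncolored), and afterward $(v,e)$ sees at most $1+1+2+1=5$ (since $(v,e')$ and $(w,e')$ remain uncolored). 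Both steps succeed.
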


\begin{figure}[h!]
\begin{center}
\includegraphics[width=14cm]{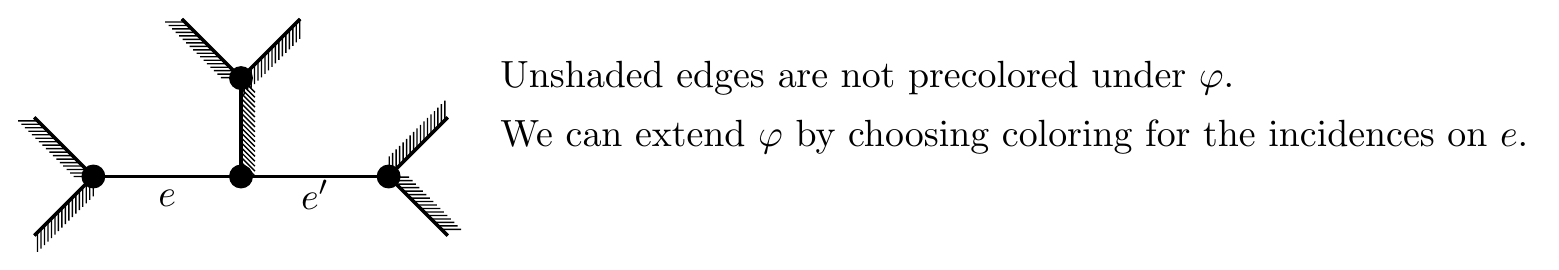}
\caption{An illustration for Lemma~\ref{lem:sub}}\label{fig:lem}\end{center}
\end{figure}

The following lemma was presented in \cite{BBS2017}, and we write its proof here for the sake of completeness.

\begin{lemma}[Claim 3, \cite{BBS2017}]\label{lem:list}
Let $G$ be a subcubic graph, $L$ be an incidence $6$-list assignment of $G$.
Let $v_0$, $v_1$, $v_2$ be three vertices so that $v_0v_1, v_1v_2\in E(G)$ and $v_0v_2\not\in E(G)$. Suppose that for each $i\in\{0,1\}$, there is an edge $e_{v_i}=v_iu_i$ incident to $v_i$ so that
 $e_{v_0}$ and $e_{v_1}$ do not share an endpoint and $u_1\neq v_2$.
(See Figure~\ref{fig:list}.)
Then there exist $\alpha \in L(u_0,e_{v_0})$, $\beta \in L(v_0,e_{v_0})$, $\gamma \in L(u_1,e_{v_1})$, $\delta \in L(v_1,e_{v_1})$, and $\eta \in L(v_2,v_1v_2)$ with $\alpha \neq \beta$, $\delta \neq \gamma$, and $\delta \neq \eta$ satisfying the following:
\begin{eqnarray}\label{condition:twoedges}
&&  |L( v_0,v_0v_1 ) \cap \{\alpha,\delta\}| \leq 1\quad\text{and}\quad
|L( v_1,v_0v_1) \cap \{\beta,\gamma,\eta\}| \leq 1.
\end{eqnarray}
\end{lemma}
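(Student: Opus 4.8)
The plan is to prove Lemma~\ref{lem:list} by a careful greedy argument, choosing the five colors $\alpha,\beta,\gamma,\delta,\eta$ in a well-chosen order so that the two cardinality constraints in~\eqref{condition:twoedges} can always be met, while keeping the inequalities $\alpha\neq\beta$, $\delta\neq\gamma$, $\delta\neq\eta$. First I would fix notation: write $A=L(v_0,v_0v_1)$ and $B=L(v_1,v_0v_1)$ for the two ``target'' lists on the middle edge, each of size $6$. The condition $|A\cap\{\alpha,\delta\}|\le 1$ says we must avoid picking \emph{both} $\alpha$ and $\delta$ inside $A$; similarly $|B\cap\{\beta,\gamma,\eta\}|\le 1$ says at most one of $\beta,\gamma,\eta$ may lie in $B$. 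Since $\alpha,\beta$ come from the same list $L(v_0,e_{v_0})$ (sizes $6$) and must merely be distinct, and likewise $\gamma,\delta$ from $L(v_1,e_{v_1})$ with $\delta\neq\gamma$, and $\eta$ from $L(v_2,v_1v_2)$ with $\delta\neq\eta$, there is a lot of slack coming from the list size $6$ versus the at most $7$ constraints touching any single incidence — this slack is what makes the statement true.

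The key steps, in order, would be: (1) Choose $\delta\in L(v_1,e_{v_1})$. Try first to pick $\delta\notin A$; this is possible unless $L(v_1,e_{v_1})\subseteq A$, which forces $L(v_1,e_{v_1})=A$ and then \emph{any} choice of $\delta$ lies in $A$, so we commit to the case $\delta\in A$ and will then be obliged to choose $\alpha\notin A$ later. (2) Choose $\alpha\in L(u_0,e_{v_0})$: if $\delta\notin A$ we are free, otherwise we need $\alpha\notin A$, possible unless $L(u_0,e_{v_0})=A$, in which case we instead would have gone back and re-picked $\delta\notin A$ — so really one does a short case split showing at least one of the two lists $L(u_0,e_{v_0})$, $L(v_1,e_{v_1})$ is not equal to $A$ (if both equal $A$ then, since $|A|=6$, we can still pick $\alpha\neq$ everything we need — here one uses that $\alpha$ only additionally needs $\alpha\neq\beta$, and $\beta$ is chosen afterwards). (3) Choose $\beta\in L(v_0,e_{v_0})$ with $\beta\neq\alpha$ and, aiming at the $B$-constraint, with $\beta\notin B$ if possible; again $|L(v_0,e_{v_0})|=6$ gives room. (4) Choose $\gamma\in L(u_1,e_{v_1})$ with $\gamma\neq\delta$ and, if $\beta$ was forced into $B$, with $\gamma\notin B$; else a free-ish choice. (5) Choose $\eta\in L(v_2,v_1v_2)$ with $\eta\neq\delta$ and, if one of $\beta,\gamma$ already sits in $B$, with $\eta\notin B$. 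At each of steps (3)–(5) the only way a desired avoidance fails is that the relevant $6$-element list coincides with $B$; since $B$ itself has size $6$, even then one argues that at most one of $\beta,\gamma,\eta$ need land in $B$ — concretely, whenever a list equals $B$ we are forced ``into $B$'', but the three lists $L(v_0,e_{v_0}),L(u_1,e_{v_1}),L(v_2,v_1v_2)$ cannot all be forced simultaneously without violating one of $\beta\neq\alpha$, $\gamma\neq\delta$, $\eta\neq\delta$ together with a counting bound, so at least two of $\beta,\gamma,\eta$ can be taken outside $B$.

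I expect the main obstacle to be organizing the case analysis cleanly: the choices interact (e.g. $\delta$ appears in both constraints, as the ``$\delta$'' in $|A\cap\{\alpha,\delta\}|\le 1$ and as the forbidden value for $\gamma$ and $\eta$), so a naive sequential greedy may paint itself into a corner and one must decide \emph{which} list, if any, is the ``bad'' list equal to $A$ (resp. $B$) before committing. The cleanest route is probably: first handle the generic case where $L(v_1,e_{v_1})\neq A$ and at least two of the three lists $L(v_0,e_{v_0}),L(u_1,e_{v_1}),L(v_2,v_1v_2)$ differ from $B$ — here a straightforward left-to-right greedy works — and then dispatch the few degenerate configurations (one or more of these lists equal to $A$ or $B$) by hand, each time using $|A|=|B|=6$ to find room. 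I would also keep in mind the hypotheses that $e_{v_0},e_{v_1}$ share no endpoint, $u_1\neq v_2$, and $v_0v_2\notin E(G)$: these ensure the five incidences in question are pairwise non-adjacent except for the genuinely imposed relations, so no hidden extra constraint appears, and the only conditions to satisfy are exactly the listed ones.
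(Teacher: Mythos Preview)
Your greedy plan has a genuine gap: it never uses the possibility that two (or three) of the chosen colors \emph{coincide}, and without that option the argument breaks in the degenerate cases you flag but do not resolve. Concretely, suppose $L(v_0,e_{v_0})=L(u_1,e_{v_1})=L(v_2,v_1v_2)=L(v_1,v_0v_1)=B$. Then every admissible $\beta,\gamma,\eta$ lies in $B$, and the side constraints $\beta\neq\alpha$, $\gamma\neq\delta$, $\eta\neq\delta$ each rule out only a single element from a $6$-element list, so there is no ``counting bound'' forcing two of the three lists to differ from $B$. Your conclusion that ``at least two of $\beta,\gamma,\eta$ can be taken outside $B$'' is simply false here. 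The same failure occurs for the $A$-constraint when $L(u_0,e_{v_0})=L(v_1,e_{v_1})=L(v_0,v_0v_1)=A$: any choice then has $\alpha,\delta\in A$, so if $\alpha\neq\delta$ you get $|A\cap\{\alpha,\delta\}|=2$, and your Step~(2) remark (``we can still pick $\alpha\neq$ everything we need'') does not address this at all.

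The paper's proof hinges on exactly the trick you are missing: since \eqref{condition:twoedges} bounds the size of the \emph{set} $\{\alpha,\delta\}$ (resp.\ $\{\beta,\gamma,\eta\}$) inside the target list, one may deliberately choose $\alpha=\delta$ or $\beta=\gamma=\eta$ to collapse the set to a singleton and satisfy the bound automatically. The paper first picks $\beta,\gamma,\eta$ by looking at pairwise/common intersections of their three source lists (taking a common element whenever possible), and only then picks $\alpha,\delta$ from the reduced lists $L(u_0,e_{v_0})\setminus\{\beta\}$ and $L(v_1,e_{v_1})\setminus\{\gamma,\eta\}$, again preferring a common value. Your sequential ``avoid the bad list'' heuristic cannot substitute for this, because the bad list may literally equal the source list. (Incidentally, your opening paragraph also misstates the setup: $\alpha$ and $\beta$ come from \emph{different} lists $L(u_0,e_{v_0})$ and $L(v_0,e_{v_0})$, not the same one; likewise for $\gamma,\delta$.)
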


\begin{figure}[h!]
\centering
  \includegraphics[scale=1]{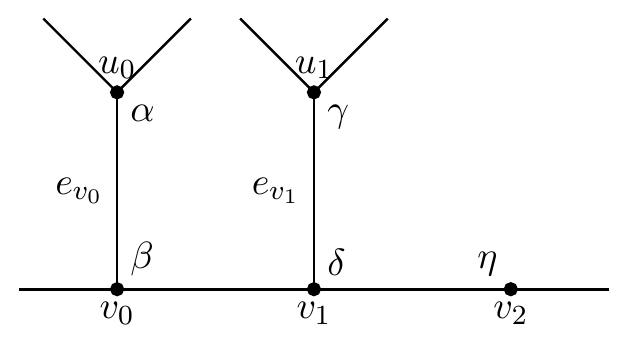}
\caption{An illustration for Lemma~\ref{lem:list}}\label{fig:list}
\end{figure}

\begin{proof}
We first deal with the incidence $(v_1,v_0v_1)$.
For simplicity, let $B = L(v_0,e_{v_0})$, $ C= L(u_1,e_{v_1})$, and $E = L(v_2,v_1v_2)$. If $B\cap C\cap E \neq \emptyset$, then we set $\beta = \gamma = \eta = \mu$ for some $\mu \in  B\cap C\cap E$, and so \eqref{condition:twoedges} holds for
$(v_1,v_0v_1)$.
Suppose that $B\cap C\cap E=\emptyset$.
If $B$, $C$, and $E$ are pairwise disjoint, then each of $B\cup C$, $C\cup E$, $B\cup E$ has 12 elements, and so for some two $X,Y\in\{B,C,E\}$ so that
$|X-L(v_1,v_0v_1)|\ge 1$ and $|Y-L(v_1,v_0v_1)|\ge 1$, which implies that can choose $\beta$, $\gamma$, and $\eta$ in such a way that $|L( v_1,v_0,v_1) \cap \{\beta,\gamma,\eta\}| \leq 1$.
Suppose that $B\cap C \neq \emptyset$ (the cases $B\cap E \neq \emptyset$ and $C\cap E \neq \emptyset$ are similar). We first set $\beta = \gamma = \mu$ for some $\mu \in B\cap C$.
If $\mu \not\in L( v_1,v_0v_1)$, then for any $\eta\in E$, $|L( v_1,v_0v_1) \cap \{\beta,\gamma,\eta\}| \leq 1.$
If $\mu \in L(v_1,v_0v_1 )$, then there exists $\eta \in E\setminus L( v_1,v_0v_1)$ (note that $E\setminus L( v_1,v_0v_1)=\emptyset$ implies that $\mu\in B\cap C\cap E$, a contradiction.) and so that $|L( v_1,v_0v_1) \cap \{\beta,\gamma,\eta\}| \leq 1$.

We now deal with the incidence $(v_0,v_0v_1)$.
For simplicity, let $A' = L(u_0,e_{v_0})\setminus \{\beta\}$ and $D' = L(v_1,e_{v_1})\setminus\{ \gamma,\eta\}$. Note that $|A'| \geq 5$ and $|D'| \geq 4$.
If $A'\cap D' \neq \emptyset$, then we set $\alpha = \delta = \mu$ for some $\mu \in A'\cap D'$, so that $ |L(v_0,v_0v_1 ) \cap \{\alpha,\delta\}| \leq 1.$
Suppose that $A'\cap D' = \emptyset$.
Then $|A'\cup D'| \geq 9$. Then $A'\cup D'-L(v_0,v_0v_1)\neq \emptyset$, and so take $\mu \in A'\cup D'-L(v_0,v_0v_1)$.
Note that either $\mu\in A'$ or $\mu\in D'$.
If $\mu\in A'$, then let  $\alpha=\mu$, and so $ |L(v_0,v_0v_1 ) \cap \{\alpha,\delta\}| \leq 1$ for any  $\delta$.
If $\mu\in D'$, then we set $\delta = \mu$, so that $ |L(v_0,v_0v_1 ) \cap \{\alpha,\delta\}| \leq 1$ for any  $\alpha$.
This completes the proof.
\end{proof}

We mention a result in \cite{M2005} and add its proof here for the sake of completeness.

\begin{lemma}[Proposition 3, \cite{M2005}]\label{lem:12decomp}
Let $G$ be a semicubic graph with exactly one pendent vertex $v$ such that $G-v$ is 2-connected.
Then $G$ has a (1,2)-decomposition.
\end{lemma}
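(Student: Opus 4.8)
The plan is to reduce the statement to a known theorem of Petersen on $2$-factorizations of regular graphs, via an auxiliary graph. Let $G$ be a semicubic graph with exactly one pendent vertex $v$, so $v$ has a unique neighbor $w$ with $\deg_G(w)=3$, and every other vertex has degree $3$. Removing $v$ leaves the $2$-connected graph $G-v$, in which $w$ has degree $2$ and all other vertices have degree $3$; thus $G-v$ is subcubic with exactly one vertex of degree $2$. A $(1,2)$-decomposition of $G$ corresponds to choosing a $1$-factor $E_1$ of $G$ whose complement $E_2=E(G)\setminus E_1$ is $2$-regular. Since the pendent edge $vw$ must lie in $E_1$ (it cannot be in a $2$-regular subgraph), this amounts to: find a perfect matching of $G$ using the edge $vw$, equivalently find a perfect matching of $G-v-w$ that, together with $vw$, leaves a $2$-regular graph. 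A cleaner reformulation: first I would contract the pendent edge, or rather build from $G-v$ a cubic graph and apply the fact that \emph{every bridgeless cubic graph has a proper $3$-edge-coloring's weaker cousin}, namely that it decomposes into a perfect matching and a $2$-factor — but $G-v$ need not be bridgeless, so I must be more careful.

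The key construction I would use: take two disjoint copies $H_1,H_2$ of $G$ and glue them at their pendent vertices, i.e.\ identify the pendent vertex of $H_1$ with that of $H_2$ (or, equivalently, delete both pendent vertices and add an edge joining the two copies of $w$). The resulting graph $\widehat{G}$ is cubic: the two vertices of degree $2$ (the two copies of $w$) become degree $3$ after adding the connecting edge $w_1w_2$, and everything else already had degree $3$. I would then argue $\widehat{G}$ is $2$-connected — this is where the hypothesis that $G-v$ is $2$-connected is used: any cut vertex or cut edge of $\widehat G$ would have to be the edge $w_1w_2$ or one of its endpoints, but deleting $w_1w_2$ leaves the two $2$-connected pieces $G_i-v_i$ still joined through... hmm, actually deleting $w_1w_2$ disconnects $\widehat G$, so $w_1w_2$ is a bridge. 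So instead I would glue along a different gadget, or simply invoke Fact~\ref{fact}-style reasoning on a better auxiliary graph; alternatively, subdivide nothing and instead add \emph{two} parallel connections — but $\widehat G$ must be simple for Petersen, so I would add a path of length two or a small gadget between $w_1$ and $w_2$ to keep it bridgeless and $3$-regular. The honest route is: form $\widehat G$ by taking two copies of $G-v$ and adding two internally disjoint paths between $w_1$ and $w_2$, each path having one internal vertex of degree... no. Let me instead just add the single edge $w_1w_2$ and accept a bridge, then use that a connected cubic graph with a single bridge still admits a $(1,2)$-decomposition: its bridge lies in every perfect matching, and Petersen's theorem applies to each bridgeless "block" after appropriate modification.

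Given these complications, the cleanest approach — and the one I would actually write — is direct rather than through a global auxiliary graph: let $G_0 = G-v$, a $2$-connected graph with exactly one vertex $w$ of degree $2$ and all others of degree $3$. Split $w$ into two degree-$1$ vertices... no; better: add a new vertex $z$ adjacent only to $w$, obtaining a copy of $G$ again. Instead, I would take $G_0$ and duplicate the vertex $w$: replace $w$ by two vertices $w',w''$, each adjacent to one of the two neighbors of $w$ in $G_0$ (splitting the two edges at $w$), then add the edge $w'w''$. The result $\widehat G$ is cubic? No — $w'$ has degree $2$. So add edge $w'w''$ \emph{twice}? Not simple.

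I concede the gadget bookkeeping is the obstacle; here is the plan as I would commit it. The main idea: apply Petersen's theorem (every bridgeless cubic graph is the union of a perfect matching and a $2$-factor) to a cubic graph $\widehat G$ obtained from $G$, and the hard part is constructing $\widehat G$ so that it is simultaneously (a) cubic, (b) bridgeless — using $2$-connectivity of $G-v$ — and (c) such that a perfect-matching/$2$-factor split of $\widehat G$ restricts to a $(1,2)$-decomposition of $G$. Concretely I would: (1) let $w$ be the neighbor of $v$; (2) form $\widehat G$ from two disjoint copies $G^{(1)},G^{(2)}$ of $G$ by deleting the pendent vertices $v^{(1)},v^{(2)}$ and joining $w^{(1)}$ to $w^{(2)}$ by a \emph{single} edge $f$; (3) observe $\widehat G$ is cubic and connected, and show it is bridgeless apart from $f$, and moreover that $f$ lies in a triangle-free structure so that in any perfect matching $M$ of $\widehat G$ we may, after a local swap along an alternating path guaranteed by $2$-connectivity of $G^{(i)}-v^{(i)}$, assume $f\in M$; (4) then $M\cap E(G^{(i)})$ together with the pendent edge is a perfect matching of $G^{(i)}=G$, and its complement in $G$ is $2$-regular, giving the $(1,2)$-decomposition. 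Alternatively, and perhaps more robustly, invoke the stronger fact that every cubic graph with at most one bridge — or every connected cubic graph in which every bridge lies on a path to a leaf — has such a decomposition, applied to $\widehat G$ with its one bridge $f$; this avoids the alternating-path swap. I expect step (3)/(the bridgelessness-and-matching argument) to be the main obstacle, precisely because making the construction simple and $3$-regular while controlling bridges requires the $2$-connectivity hypothesis in an essential way, and I would organize the whole proof around that point.
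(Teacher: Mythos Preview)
Your reduction is right: since the pendent edge $vw$ lies in every $1$-factor of $G$, and the complement of any $1$-factor in a semicubic graph is automatically a disjoint union of cycles on the degree-$3$ vertices, the lemma is equivalent to the statement that $G$ has a perfect matching. The gap is in how you obtain that matching. In the construction you finally commit to --- two copies of $G-v$ joined by the single edge $f=w_1w_2$ --- the graph $\widehat G$ is cubic, but $f$ is a bridge (as you yourself observe), so Petersen's theorem does not apply to it. Neither of your proposed repairs closes this: the ``local swap along an alternating path'' in step~(3) presupposes that $\widehat G$ already has \emph{some} perfect matching, which is precisely what is in question; and invoking ``the stronger fact that every cubic graph with at most one bridge has a $(1,2)$-decomposition'' is circular, since that fact applied to $\widehat G$ and then restricted to one copy \emph{is} the lemma you are proving (and the standard proof of that fact is via Tutte's theorem anyway). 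None of the other gadgets you sketch --- three copies through a common vertex, doubling $w$, suppressing $w$ --- produces a bridgeless cubic graph whose perfect matching cleanly restricts to one of $G$, for exactly the reasons you discover along the way.

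The paper sidesteps all auxiliary constructions and verifies Tutte's condition for $G$ directly. The only structural input is that $G$ has exactly one bridge, namely $vw$, which is immediate from the $2$-connectivity of $G-v$. For any nonempty $S\subseteq V(G)$ and any odd component $C$ of $G-S$, every vertex of $G$ has odd degree, so by parity the number of $S$--$C$ edges is odd; if that number were $1$ the edge would be a bridge of $G$, so at most one odd component sends a single edge to $S$ and all others send at least $3$. Hence the number of edges between $S$ and $G-S$ is at least $3\,q(G-S)-2$ and at most $3|S|$, giving $q(G-S)\le |S|$. Tutte's theorem then furnishes the perfect matching, and with it the $(1,2)$-decomposition. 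This three-line argument is exactly what your gadget bookkeeping was trying to route around; I would recommend replacing the Petersen plan with it.
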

\begin{proof} We will show that $G$ satisfies Tutte's condition.
Let $q(H)$ be the number of odd components of a graph $H$.
Let $S \subseteq V(G)$, $C$ be an odd component of $G-S$.
Since the sum of degrees of the vertices of $G$ is  even, $G$ has an odd number of edges between $S$ and $C$.
Since $G$ has only one bridge, $G-S$ has at least 3 such edges except at most one connected component. The number of edges between $S$ and $G-S$ is at least $3(q(G-S)-1)+1=3q(G-S)-2$. Then $3q(G-S)-2 \leq 3|S|$ and so $q(G-S) \leq |S|$. Therefore $G$ satisfies the Tutte's condition.
\end{proof}

We finish the section by mentioning a result of incidence choosability of a cycle.
\begin{lemma}[Theorem 5, \cite{BBS2017}]\label{lem:cycle}
For every cycle $C$ of length $n$, $ch_i(C)\le 4$ and $ch_i(C)=3$ if and only if $n$ is a multiple of $3$.
\end{lemma}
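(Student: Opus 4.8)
The plan is to reduce the lemma to standard facts about the square of a cycle. Write $C=v_0v_1\cdots v_{n-1}v_0$ with edges $e_i=v_iv_{i+1}$ (indices mod $n$), and list the $2n$ incidences in the cyclic order $x_0,y_0,x_1,y_1,\ldots,x_{n-1},y_{n-1}$, where $x_i=(v_i,e_{i-1})$ and $y_i=(v_i,e_i)$. A routine check of the three adjacency rules shows that, in this cyclic order, each incidence is adjacent to exactly the two incidences immediately before it and the two immediately after it, and to no others (the only case needing a separate hand check is $n=3$, where one must verify the two ``opposite'' incidences are non-adjacent). Hence the incidence graph of $C$ is isomorphic to $C_{2n}^2$, the square of the cycle on $2n$ vertices, so $ch_i(C)=ch(C_{2n}^2)$ and $\chi_i(C)=\chi(C_{2n}^2)$, and it suffices to prove that $ch(C_{2n}^2)\le 4$ for all $n\ge 3$ and that $ch(C_{2n}^2)=3$ if and only if $3\mid n$.

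For the lower bounds, $C_{2n}^2$ contains a triangle (any three consecutive vertices), so $ch_i(C)\ge 3$ always. If $3\nmid n$, take the constant assignment $L\equiv\{1,2,3\}$; an incidence $L$-coloring is then exactly a proper $3$-coloring of $C_{2n}^2$. In any such coloring the triangle $\{i,i+1,i+2\}$ is rainbow, which forces $c_{i+3}=c_i$ for every $i$; propagating around the cycle forces $3\mid 2n$, hence $3\mid n$, a contradiction. Thus $ch_i(C)\ge 4$ whenever $3\nmid n$.

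For the upper bound $ch_i(C)\le 4$, observe that for $n\ge 3$ the graph $C_{2n}^2$ is $2$-connected, $4$-regular, not complete, and not a cycle, hence not a Gallai tree; by the degree-choosability theorem of Borodin and of Erd\H{o}s--Rubin--Taylor, any list assignment with lists of size $\deg=4$ is realizable, so $ch(C_{2n}^2)\le 4$. (A self-contained alternative: deleting two consecutive vertices of $C_{2n}^2$ leaves the path-square $P_{2n-2}^2$, which is $2$-degenerate and hence $3$-choosable; colour it first and then colour the two deleted vertices using the slack of the $4$-lists, with a short case analysis for how the cycle closes up.) Together with the previous paragraph, this gives $ch_i(C)=4$ when $3\nmid n$.

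The main obstacle is the remaining implication $3\mid n\Rightarrow ch_i(C)\le 3$, i.e.\ $ch(C_{2n}^2)\le 3$ when $6\mid 2n$. The base case $2n=6$ is $C_6^2\cong K_{2,2,2}$, and $ch(K_{2,2,2})=3$: a valid list colouring amounts to choosing representatives from the six $3$-lists that coincide only within the three non-edges, which can be checked directly by distinguishing whether the two lists of a non-edge intersect. For $m=2n$ divisible by $3$ with $m\ge 9$ one wants to induct, but naively deleting three consecutive vertices yields a path-square rather than a smaller $C_{m-3}^2$, so a genuinely cyclic argument is required: I would fix a short starting block, propagate a greedy choice around the cycle keeping two colours of slack, and resolve the wrap-around by a bounded case analysis, exploiting that when $3\mid m$ the obstructions to closing up are precisely the ones excluded in the base case. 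A cleaner alternative is to invoke the theorem of Prowse and Woodall that $C_m^2$ is chromatic-choosable, which immediately gives $ch(C_{2n}^2)=\chi(C_{2n}^2)=3$ whenever $3\mid 2n$. Either way, the delicate point is the closing of the cycle.
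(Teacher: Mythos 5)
The paper never proves this lemma: it is imported wholesale as Theorem 5 of \cite{BBS2017}, with the bracketed citation in the lemma header as the entire justification, so there is no in-paper argument to compare yours against. Judged on its own merits, most of your argument is correct and complete. The cyclic ordering $x_0,y_0,\ldots,x_{n-1},y_{n-1}$ does realize the incidence graph of $C_n$ as $C_{2n}^2$ (and the extra hand check at $n=3$ is exactly the one needed); the rigidity argument $c_{i+3}=c_i$ correctly rules out a proper $3$-colouring of $C_{2n}^2$ when $3\nmid 2n$, i.e.\ when $3\nmid n$; and degree-choosability of a $2$-connected $4$-regular graph that is neither complete nor a cycle legitimately yields $ch(C_{2n}^2)\le 4$ for every $n\ge 3$.

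The one place where you do not actually have a proof is the implication $3\mid n\Rightarrow ch(C_{2n}^2)\le 3$, which you yourself flag as the main obstacle. Your first route is explicitly a plan (``fix a short starting block, propagate\ldots{} resolve the wrap-around by a bounded case analysis''), and the wrap-around is precisely where all the difficulty of list-colouring powers of cycles lives; a reader cannot reconstruct a proof from what is written. Your second route --- citing Prowse and Woodall's theorem that squares of cycles are chromatic-choosable, so $ch(C_{2n}^2)=\chi(C_{2n}^2)=3$ when $3\mid 2n$ --- does close the gap, and since the paper itself rests the entire lemma on a citation there is nothing objectionable about resting one direction on one; but you should commit to that citation rather than offer it as an optional alternative to an argument you have not given. (Your base case $ch(K_{2,2,2})=3$ is fine --- it is the Erd\H{o}s--Rubin--Taylor result on cocktail-party graphs --- but by itself it does not feed into any completed induction.)
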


\section{Proofs of Theorems~\ref{thm:PM} and ~\ref{the:submain}}\label{sec:proofs}

First, we define several notions for the proof.
A graph in which of each block is either a cycle or a $K_2$ is said to be \textit{cycle-tree} if no two pendent edges are adjacent. Note that an edge of a cycle-tree not on a cycle is a cut edge.  We call a cycle $C$ of a  cycle-tree  a \textit{pendent} if the cycle is a pendent block.

\begin{figure}[b!]
  \centering
  \includegraphics[width=17cm]{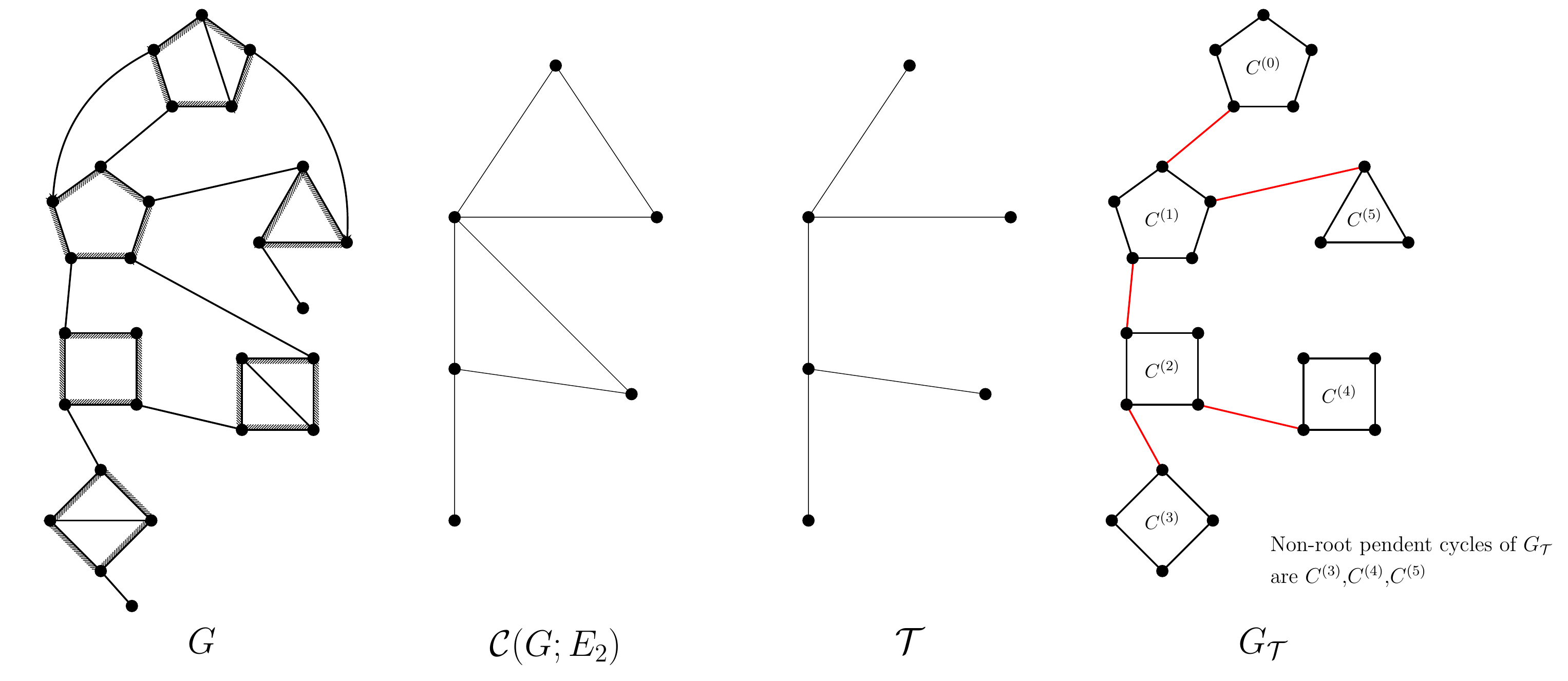}\\
\caption{A semicubic graph $G$ with a (1,2)-decomposition $(E_1,E_2)$ ($E_2$ are the shaded edges in the leftmost figure), the graph $\mathcal{C}(G;E_2)$, a spanning tree $\mathcal{T}$ of $\mathcal{C}(G;E_2)$,  a cycle-tree $G_\mathcal{T}$ with the structure $\mathcal{T}$}\label{fig:cycle-tree2}
\end{figure}

Let $G$ be a connected graph having a (1,2)-decomposition $(E_1,E_2)$.
For simplicity, let  $\mathcal{C}(G;E_2)$ be the graph whose vertices are corresponding to the cycles of $G[E_2]$ and two vertices corresponding to two cycles $C$ and $C'$ of $G[E_2]$ are adjacent if and only if there is an edge in $G$ between $C$ and $C'$.
Then from a spanning tree $\mathcal{T}$ of $\mathcal{C}(G;E_2)$, we naturally obtain
a minimal cycle-tree $G_\mathcal{T}$ as a subgraph of $G$ containing the graph $G[E_2]$ (This may not be unique by the choice of  cut edges of the cycle-tree.), which comes from the definition of $\mathcal{C}(G;E_2)$.
In this case,  this cycle-tree $G_\mathcal{T}$ is a cycle-tree with the structure $\mathcal{T}$.
See Figure~\ref{fig:cycle-tree2}.

Recall that Depth-First-Search (DFS for short) is an algorithm to find a spanning tree of a graph, (called a DFS-tree in the following) and it gives a natural vertex ordering of the graph and we call it a DFS ordering. For instance, an ordering $C^{(0)}$, $C^{(1)}$, \ldots, $C^{(5)}$ of the cycles in Figure~\ref{fig:cycle-tree2} is a DFS-ordering. Let $\mathcal{T}$ be a DFS-tree with an ordering $C^{(0)}$, $C^{(1)}$, \ldots, $C^{(m)}$ and  $G_\mathcal{T}$ be a cycle-tree with the structure $\mathcal{T}$.
For a non-root cycle $C^{(i)}$ of $G_\mathcal{T}$,
the \textit{parent vertex} $p(C^{(i)})$ of $C^{(i)}$ is defined to be a vertex on $C^{(j)}$ for some $j<i$, which is an endpoint of the cut edge joining
a vertex $C^{(i)}$ and a vertex $C^{(j)}$.

First we state a useful lemma, which will be proved later.
An \textit{almost cubic graph} is a graph such that every vertex has degree three, except at most one vertex.

\begin{lemma}\label{lem:one-cycle}
Let $G$ be an almost cubic connected graph with a (1,2)-decomposition $(E_1,E_2)$ and $L$ be an incidence $6$-list assignment of $G$.
Let $G_\mathcal{T}$ be a cycle-tree with the structure $\mathcal{T}$, where $\mathcal{T}$ is a DFS-tree of $\mathcal{C}(G;E_2)$. Then $G$ is  $L$-choosable.
Moreover, an edge $e^*$ incident to the root cycle $C^{(0)}$ is freely $L$-choosable if one of the following cases holds (see Figure~\ref{fig:Hamiltonian22}):
\begin{itemize}
\item[\rm{(i)}] $e^*=x_0x_1$ is an edge on $C^{(0)}:x_0x_1\ldots, x_{n-1}$ $(n\ge 7)$
    such that $x_2x_4, x_3x_5, x_1x_6 \in E(G)$;
\item[\rm{(ii)}] $e^*$ is a pendent edge;
\item[{\rm(iii)}] $e^*$ is not a cut edge of  $G_{\mathcal{T}}$,
$e^*$ is incident to  a non-root pendent 6-cycle $x_1x_2\ldots,x_6$ of $G_{\mathcal{T}}$ where $x_2x_4,x_3x_5\in E(G)$ and
$x_1$ is an endpoint of $e^*$.
\end{itemize}
\end{lemma}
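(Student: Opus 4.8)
The plan is to prove Lemma~\ref{lem:one-cycle} by induction on the number $m+1$ of cycles in $G_{\mathcal{T}}$, peeling off one pendent cycle at a time according to the reverse of the DFS ordering. First I would set up the base case: when $m=0$ the graph $G$ consists of a single cycle $C^{(0)}$ together with chords coming from $E_1$ (together with at most one pendent edge, since $G$ is almost cubic). Here the incidences on $C^{(0)}$ form a cyclic structure and I would either invoke Lemma~\ref{lem:cycle} directly or argue by hand, greedily coloring around $C^{(0)}$ while keeping careful count of forbidden colors (each incidence has at most $7$ neighbours, so on a $6$-list one needs the structure); for the ``freely $L$-choosable'' claims (i)--(iii), I would precolor the two incidences on $e^*$ with the prescribed colors $c_1,c_2$ and then show the remaining incidences can be completed — this is precisely where the special chord conditions ($x_2x_4,x_3x_5,x_1x_6\in E(G)$, resp.\ the $6$-cycle condition) are used to gain slack.

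For the inductive step, let $C^{(m)}$ be the last cycle in the DFS ordering; since $\mathcal{T}$ is a DFS-tree, $C^{(m)}$ is a leaf of $\mathcal{T}$, hence a pendent cycle of $G_{\mathcal{T}}$ whose only attachment to the rest of $G$ is through edges of $E_1$ (its parent cut edge $e^{(m)}$, plus possibly one more $E_1$-edge if the corresponding vertex of $C^{(m)}$ has its third edge going back to an earlier cycle — but by DFS that edge is still an $E_1$-edge, not a tree edge). Let $G'=G-V(C^{(m)})$, extended/adjusted so it is again an almost cubic connected graph with an induced $(1,2)$-decomposition and a cycle-tree $G'_{\mathcal{T}'}$ with structure $\mathcal{T}'=\mathcal{T}-C^{(m)}$, still a DFS-tree. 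Here I must be slightly careful: deleting $V(C^{(m)})$ may create one or two vertices of degree $2$ in $G'$, which is fine because ``almost cubic'' allows one exceptional vertex, and for the possibility of two low-degree vertices I would instead suppress them or handle them as a localized case. By the induction hypothesis $G'$ has an incidence $L$-coloring $\varphi'$; indeed, since the vertices where $C^{(m)}$ was attached are among the places where we can demand ``freely'' via case (ii) or (iii), I would invoke the stronger ``freely $L$-choosable'' part of the hypothesis on $G'$ applied to the cut edge $e^{(m)}$ (which is a pendent-type edge after deletion), thereby getting to choose the colors on the two incidences of $e^{(m)}$ that lie in $G'$.

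Then I would extend $\varphi'$ over $V(C^{(m)})$: at this point every incidence inside $C^{(m)}$ has a $6$-list, and the only already-colored adjacent incidences are those on the one or two $E_1$-edges linking $C^{(m)}$ to $G'$. Counting forbidden colors around the cycle $C^{(m)}$ — including its internal chords from $E_1$ — one checks that $4$ colors always suffice locally except at a bounded number of ``bottleneck'' incidences near the attachment vertex/vertices, and those bottlenecks are resolved by using the freedom we reserved when coloring $e^{(m)}$ in $G'$ (this is the role of choosing the two incidence colors on $e^{(m)}$ judiciously), together with Lemma~\ref{lem:list} and Lemma~\ref{lem:sub} to order the remaining extensions. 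Finally, to get the ``moreover'' conclusions (i)--(iii) for $G$ itself, I would run the same argument but with $e^*$ on the root cycle $C^{(0)}$ precolored by $c_1,c_2$ from the outset: since $C^{(0)}$ is colored last in the peeling, its incidences are the final ones to be assigned, and the chord hypotheses in (i)/(iii) (or the pendent-edge freedom in (ii)) guarantee enough available colors to close up.

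I expect the main obstacle to be the bottleneck analysis on a pendent cycle when it has \emph{two} edges to the outside (so two precolored incidences sitting close together on a possibly short cycle, e.g.\ a $4$- or $5$-cycle): there the naive count of available colors can drop to zero at one incidence, and one must exploit either the parity/length of the cycle (as in Lemma~\ref{lem:cycle}, where $3$-divisibility matters) or a clever reordering so that the tight incidence is colored before its neighbours, or the reserved freedom on the parent cut edge. A secondary nuisance is bookkeeping the exceptional degree-$2$ vertex through the induction and making sure the DFS structure is genuinely preserved under deletion so that $\mathcal{T}'$ remains a DFS-tree; I would handle this by phrasing the induction on cycle-trees abstractly (a connected almost cubic graph whose $E_2$-cycles form a rooted tree via DFS) rather than re-running DFS each time.
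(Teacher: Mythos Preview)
Your inductive peeling scheme has a structural gap that is fatal as written. A leaf $C^{(m)}$ of $\mathcal{T}$ is pendent only in the cycle-tree $G_{\mathcal{T}}$; in $G$ itself \emph{every} vertex of $C^{(m)}$ carries one $E_1$-edge, and nothing forces these to be chords of $C^{(m)}$. A pendent $k$-cycle can therefore have up to $k$ external $E_1$-edges to earlier cycles, not ``possibly one more'' --- for instance a pendent triangle has three. Deleting $V(C^{(m)})$ then produces many degree-$2$ vertices, so $G'$ is nowhere near almost cubic, and suppressing those vertices destroys the $(1,2)$-decomposition (it merges an $E_2$-edge with an $E_1$-edge). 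Two further problems: the ``freely $L$-choosable'' clauses (i)--(iii) are stated only for edges incident to the \emph{root} cycle $C^{(0)}$, so you cannot invoke them on the parent cut edge of $C^{(m)}$ unless its parent happens to be $C^{(0)}$; and in your scheme $C^{(0)}$ lies inside $G'$ and is hence colored \emph{first}, not last, so your closing argument for the ``moreover'' part is in the wrong direction.

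The paper does not argue by induction on the number of cycles. It colors $G$ in one global pass: first precolor the two incidences on $e^*$; then on every pendent cycle (and on $C^{(0)}$ when $m=0$) choose a consecutive triple $(v_0,v_1,v_2)$ and use Lemma~\ref{lem:list} to reserve colors $\alpha,\beta,\gamma,\delta,\eta$ on the five incidences of $e_{v_0}$, $e_{v_1}$ and $(v_2,v_1v_2)$ so that condition~\eqref{condition:twoedges} holds; then greedily color all remaining $E_1$-edges except the cut edges of $G_{\mathcal{T}}$ via Lemma~\ref{lem:sub}; finally color the cycles in \emph{increasing} DFS order, coloring each parent cut edge just before its child cycle. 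The point of this order is that every non-pendent $C^{(i)}$ has a child $C^{(j)}$ whose cut edge is still blank, and walking around $C^{(i)}$ starting at $p(C^{(j)})$ gives the final incidence the slack it needs. Pendent cycles have no such child; that is precisely what the reserved colors from Lemma~\ref{lem:list} buy --- they guarantee the two last incidences $(v_0,v_0v_1)$ and $(v_1,v_0v_1)$ see at most five used colors among their seven neighbours. So the bottleneck you anticipated on pendent cycles is resolved not by an inductive hypothesis on a smaller graph but by a simultaneous global precoloring, and Lemma~\ref{lem:list} is deployed up front on all pendent cycles at once rather than as a local patch.
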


\begin{figure}[b!]\centering
  \includegraphics[width=15.6cm]{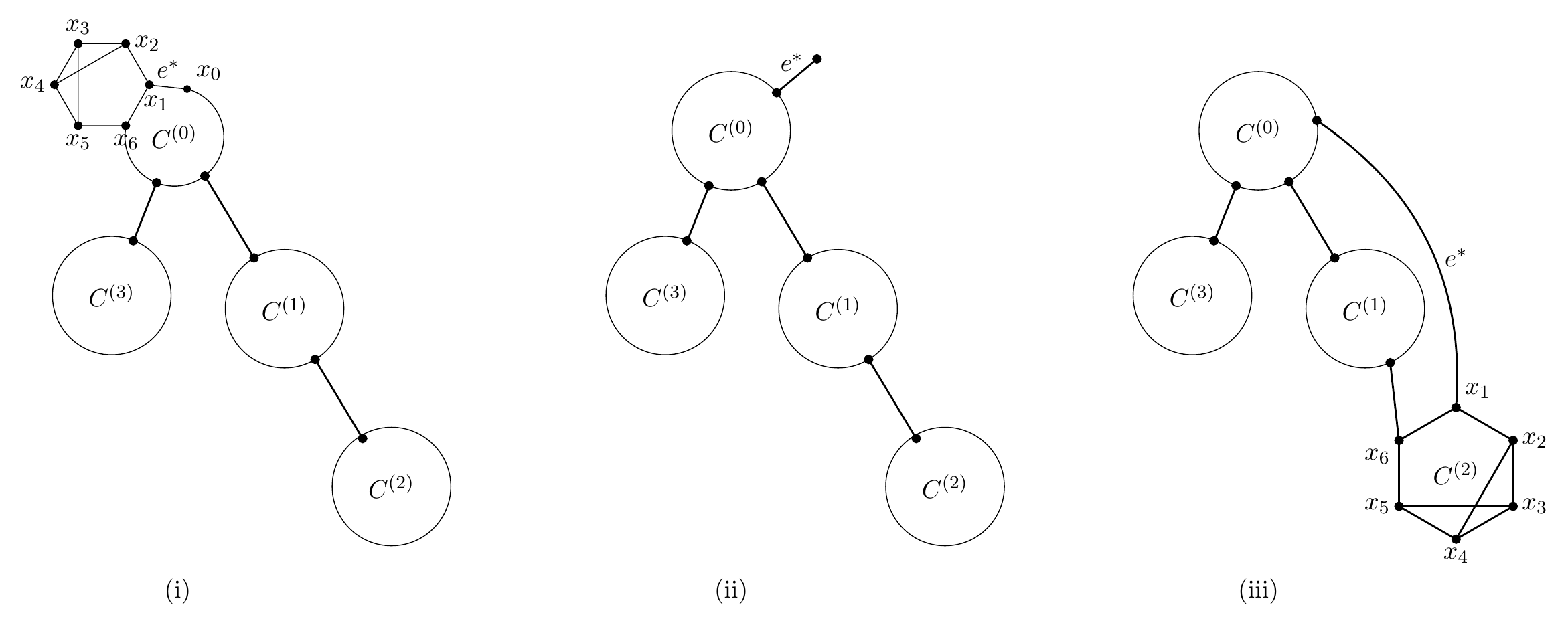}\\
  \caption{An illustration for Lemma~\ref{lem:one-cycle}}\label{fig:Hamiltonian22}
\end{figure}

We also mention a famous theorem called Petersen's theorem.

\begin{theorem}[Petersen's theorem \cite{p1891}]\label{thm:Petersen}
A 2-connected cubic graph contains a perfect matching.
\end{theorem}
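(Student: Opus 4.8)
The plan is to derive Petersen's theorem from Tutte's $1$-factor theorem by a short parity-and-connectivity count; this is essentially the same computation that appears in the proof of Lemma~\ref{lem:12decomp}, only cleaner because now there are no bridges at all. Recall Tutte's theorem: a graph $H$ has a perfect matching if and only if $q(H-S)\le |S|$ for every $S\subseteq V(H)$, where $q(\cdot)$ denotes the number of odd components.

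First I would fix an arbitrary $S\subseteq V(G)$ and let $C_1,\dots,C_k$ be the odd components of $G-S$; the goal is $k\le |S|$. For each $i$ let $m_i$ be the number of edges of $G$ joining a vertex of $C_i$ to a vertex of $S$. Since $G$ is cubic, $\sum_{v\in V(C_i)}\deg_G(v)=3|V(C_i)|$ is odd (as $|V(C_i)|$ is odd), while this degree sum also equals $2|E(C_i)|+m_i$; hence $m_i$ is odd. Because $G$ is $2$-connected it is bridgeless, so $m_i\ne 1$, and therefore $m_i\ge 3$ for every $i$.

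Next I would double count the edges of $G$ between $S$ and $V(G)\setminus S$. This quantity is at least $\sum_{i=1}^{k} m_i\ge 3k$, since the odd components $C_i$ already contribute that many such edges, and it is at most $\sum_{v\in S}\deg_G(v)=3|S|$. Thus $3k\le 3|S|$, i.e. $q(G-S)=k\le |S|$. As $S$ was arbitrary, Tutte's condition holds, so $G$ has a perfect matching, completing the proof.

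\textbf{Main obstacle.} There is no serious obstacle here — this is the classical argument — but the step deserving care is the parity observation: one must combine the cubicity of $G$ with the oddness of $|V(C_i)|$ to force $m_i$ odd, and then invoke $2$-connectivity precisely to upgrade $m_i\ge 1$ to $m_i\ge 3$. One also has to keep in mind that only the odd components are counted in $q(G-S)$, and that it is Tutte's theorem (rather than anything established earlier in the paper) that converts the counting inequality into the existence of a perfect matching.
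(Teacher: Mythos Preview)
Your argument is correct and is the standard derivation of Petersen's theorem from Tutte's $1$-factor theorem. Note, however, that the paper does not give its own proof of this statement: Theorem~\ref{thm:Petersen} is simply quoted from the literature (Petersen, 1891) and used as a black box in the proof of Theorem~\ref{thm:PM}. The Tutte-based parity count you wrote is exactly the one the paper employs for the closely related Lemma~\ref{lem:12decomp}, so your approach is entirely in the spirit of the surrounding text; there is just nothing in the paper to compare it against for this particular theorem.
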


Now we are ready to prove the theorems.

\begin{proof}[Proof of Theorem~\ref{thm:PM}]
Let $G$ be a 2-connected cubic graph.
By Theorem~\ref{thm:Petersen}, $G$ has a perfect matching, and so it has a (1,2)-decomposition.
By Lemma~\ref{lem:one-cycle}, Theorem~\ref{thm:PM} follows.
\end{proof}

\begin{proof}[Proof of Theorem~\ref{the:submain}] Take a pendent edge $e^*$.
Let $G_0$ be the graph obtained from $G$ by deleting all pendent vertices of $G$.
First, suppose that $G_0=K_1$.
Then $G$ is $K_2$ or $K_{1,3}$, and it is easy to see that the theorem is satisfied.
Now suppose that $G_0$ is 2-connected.
We will define an almost cubic graph $G'$ with at most one pendent vertex as follows.
For two pendent vertices $x$ and $y$ of $G$, we attach the gadget graph as in Figure~\ref{fig:gadget} so that $x$ and $y$ are identified with the vertices of degree two in the gadget. We repeat this process until we have at most one pendent vertex in $G$. In addition, we can find such $G'$ so that $e^*$ is the remaining pendent edge of $G$ (if it has.).

\begin{figure}
  \centering
 \includegraphics[width=14cm]{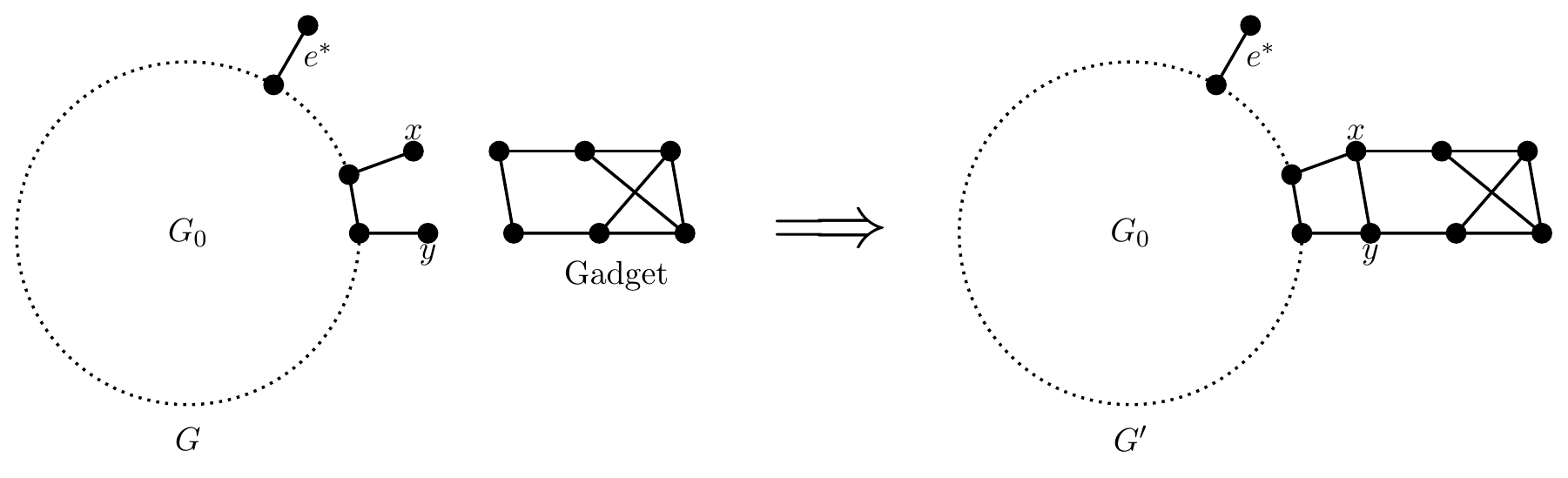}\\
  \caption{An illustration for the proof of Theorem~\ref{the:submain}}\label{fig:gadget}
\end{figure}

Let $G'$ be a resulting graph and $L'$ be an incidence $6$-list assignment of $G'$ which is an extension of $L$.
Note that  $G'$ is a semicubic graph with at most one pendent vertex,
and $G'$ minus the pendent vertex is 2-connected. By Lemma~\ref{lem:12decomp}
$G'$ has a (1,2)-decomposition ($E_1',E_2'$).
Let $\mathcal{C}'=\mathcal{C}(G',E_2')$ for simplicity.
We divide into two cases according to the existence of a pendent vertex in $G'$.

\medskip

\noindent (Case 1) Suppose that $G'$ has a pendent vertex. Then the edge $e^*$ is a pendent edge of $G'$.
We find a cycle-tree $G'_{\mathcal{T}'}$ with the structure $\mathcal{T}'$ of $G'$ so that $\mathcal{T}'$ is a DFS-tree of $\mathcal{C}'$ whose root cycle is the cycle incident to $e^*$. By (ii) of Lemma~\ref{lem:one-cycle},  $e^*$ is freely $L'$-choosable.

\medskip

\noindent (Case 2) Suppose that $G'$ has no pendent vertex. Then there is a  pendent  edge $f^*$ of $G$ such that $f^*$ is paired with $e^*$ in the construction of $G'$. Note that $e^*\in E_1'$ if and only if $f^*\in E_1'$.  Let $H^*$ be the gadget graph attached  to $e^*$ and $f^*$ in the construction of $G'$.
First, suppose that $e^*\in E_1'$. Then $f^*\in E_1'$ and so $e^*$ is incident to two cycles in $G'[E_2']$, and let $C^{(0)}$  be the cycle incident to $e^*$, so that $C^{(0)}$ is the  cycle whose vertices are in $V(G')-V(H^*)$. Take a DFS-tree $\mathcal{T}'$ of $\mathcal{C}'$ whose root cycle is $C^{(0)}$. In addition, we can let the 6-cycle of $G'[E_2']$ contained in $V(H^*)$ a pendent cycle in $\mathcal{T}'$, by considering the cycle incident to $f^*$ first in a DFS-tree $\mathcal{T}'$.
Moreover, in this case, $e^*$ is not an cut edge of $G_{\mathcal{T}}$.
By (iii) of Lemma~\ref{lem:one-cycle},  $e^*$ is freely $L'$-choosable.
Thus, $e^*$ is freely $L$-choosable in $G$.

Now suppose that $e^*\in E_2'$. We may assume that there is a cycle $C^{(0)}$ of $G'[E_2']$ which includes $V(H^*)$ and the edge $e^*$. Note that its length is at least 7.
We take a DFS-tree $\mathcal{T}$ of $\mathcal{C}'$ whose DFS-ordering is starting with $C^{(0)}$.
Then by (i) of Lemma~\ref{lem:one-cycle},  $e^*$ is freely $L'$-choosable.
\end{proof}

We finish the section by giving the proof of Lemma~\ref{lem:one-cycle}.
The idea of the proof  comes from  a cycle-tree obtained by DFS algorithm.

\begin{proof}[Proof of Lemma~\ref{lem:one-cycle}]
We may assume that $G$ has at least 7 vertices.
To see why, suppose that $G$ has at most 6 vertices.
If $G$ is one of $K_4$, the graphs $G_1$ and $G_2$ in Figure~\ref{fig:small}, then it is Hamiltonian cubic, and so $ch_i(G)\le 6$ by Theorem~\ref{thm:main:Hamiltonian}.
In addition, when $G=K_4$, $G_1$ or $G_2$ in Figure~\ref{fig:small}, none of its edges is satisfying the cases in the 'moreover` part of the lemma.
Suppose that $G$ is none of $K_4$, $G_1$ and $G_2$.
Then, since $G$ is almost cubic with a (1,2)-decomposition $(E_1,E_2)$,
$G$ is the graph $G_3$ in Figure~\ref{fig:small}, and this is the case (ii).
\begin{figure}[h!]
  \centering
  \includegraphics[width=8cm]{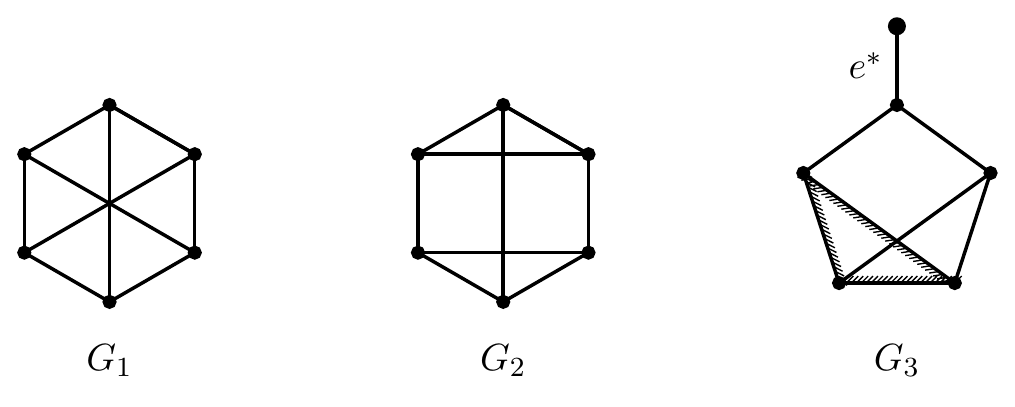}\\
  \caption{The shaded edges indicate the cycle $C^*$ in the proof of Lemma~\ref{lem:one-cycle}.}\label{fig:small}
\end{figure}
Then we can always find a forest $F$ containing $e^*$ so that $G-F$ is a cycle $C^*$ of length 3. Then we choose colors for the incidences on the edges of $F$ first, and during this choice, we can always choose colors for the incidences on $e^*$ first. Each of the incidences on the remaining edges (edges on $C^*$) has at least three available colors and $C^*$ is a cycle of length a multiple of 3. Thus by Lemma~\ref{lem:cycle}, the lemma follows.

In the following, we assume that $|V(G)|\ge 7$. Since $E_1$ is a 1-factor of $G$, we denote  by  $e_v$ the edge of $E_1$ incident to $v$, for  each vertex $v$ of $G$.
\begin{claim}\label{claim:taking:triple}
If $C$ is either a unique cycle of $G_{\mathcal{T}}$ or a non-root pendent cycle of $G_{\mathcal{T}}$, then
there is a triple $(v_0,v_1,v_2)$ of   vertices consecutive  along $C$ satisfying  {\rm(A1)}$\sim$ {\rm(A3)}:
\begin{itemize}
\item[{\rm(A1)}]  $v_0v_2\not\in E(G)$;
\item[{\rm(A2)}]  any two of $e_{v_0}$, $e_{v_1}$, $e^*$ do not share an endpoint;
\item[{\rm(A3)}]  $v_2$ is not an endpoint of $e^*$.
\end{itemize}
In this case, we let
\[I_C =\{ (u,e) \mid e\in \{ e_{v_0},e_{v_1}\}\} \cup \{ (v_2,v_1v_2)\}.\]
Then, moreover, for every two non-root pendent cycles $C$ and $C'$ of $G_{\mathcal{T}}$,
$(v,e)\in I_C$ and $(v',e')\in I_{C'}$ are not adjacent.
\end{claim}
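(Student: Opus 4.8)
The plan is to argue locally on the cycle $C$, choosing the triple $(v_0,v_1,v_2)$ of consecutive vertices so as to dodge the finitely many obstructions coming from $e^*$ and from chords of $C$ in $G$. First I would record the relevant structure: $C$ is a cycle in $G$ all of whose vertices have degree $3$ (it is either the unique cycle of a small $G_{\mathcal T}$ — but we are in the case $|V(G)|\ge 7$ — or a non-root pendent cycle, hence its length is at least $3$ and, when it is pendent, its only attachment to the rest of $G_{\mathcal T}$ is a single cut edge at the parent vertex). Each vertex $v\in V(C)$ has its matching edge $e_v\in E_1$; for $v$ on a cycle the two $E_2$-edges of $v$ lie on $C$, so $e_v$ is the unique remaining edge at $v$, and $e_v$ is a chord of $C$ or leaves $C$. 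The edge $e^*$ has at most two endpoints, so it ``forbids'' at most two vertices of $C$ for purposes of (A2)–(A3), plus a bounded number of adjacent $e_v$'s.

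Next I would count. Choosing the triple amounts to choosing a vertex $v_1\in V(C)$ together with its two $C$-neighbours $v_0,v_2$ (in the two cyclic orders). Condition (A1) fails only if $v_0v_2$ is a chord of $C$; since $C$ is a cycle each such ``short chord'' is counted by a bounded number of choices of $v_1$. Condition (A2) can fail because $e_{v_0}$ and $e_{v_1}$ share an endpoint (i.e. $v_0v_1$-type coincidence forced by a chord $e_{v_0}=v_0v_1$, impossible since $v_0v_1\in E_2$, so really $e_{v_0}$ and $e_{v_1}$ meeting at a vertex off $C$, which is again a bounded phenomenon), or because $e^*$ shares an endpoint with $e_{v_0}$ or $e_{v_1}$; condition (A3) fails only when $v_2$ is one of the $\le 2$ endpoints of $e^*$. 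All told, the number of ``bad'' positions of $v_1$ along $C$ is bounded by an absolute constant, which I expect to be comfortably less than the length of $C$ once $|V(C)|$ is not too small; the residual small cases (short cycles, or a pendent $3$-cycle with $e^*$ nearby) I would handle by hand, possibly appealing to the fact that a pendent cycle meets the rest of $G_{\mathcal T}$ in exactly one cut edge so at most one vertex of $C$ is ``used up'' internally. The main obstacle is precisely this bookkeeping: making the constant explicit and verifying it against the shortest possible $C$ (length $3$ pendent cycles), where one must check that the parent vertex plus the at-most-two endpoints of $e^*$ still leave a legal triple — here I would use that for a pendent $3$-cycle the constraints overlap enough, or else observe that such a configuration cannot arise under the hypotheses.

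Finally, for the ``moreover'' part I would argue that the sets $I_C$ attached to two distinct non-root pendent cycles $C$ and $C'$ involve only edges incident to $V(C)$ (the matching edges $e_{v_0},e_{v_1}$ and the cycle edge $v_1v_2$, all with $v_0,v_1,v_2\in V(C)$) and edges incident to $V(C')$ respectively. Two incidences $(v,e)\in I_C$, $(v',e')\in I_{C'}$ are adjacent only if $v=v'$, or $e=e'$, or $e,e'$ share the edge $vv'$. Since $C$ and $C'$ are distinct pendent cycles of the cycle-tree $G_{\mathcal T}$, they are vertex-disjoint and the only edges of $G_{\mathcal T}$ between them route through non-pendent blocks; in $G$ itself the matching edges $e_{v_0},e_{v_1}$ are chords/links that, by the choice forced in (A2) and the fact that pendent cycles attach via their parent vertex only, cannot have both endpoints among $V(C)\cup V(C')$ in a way creating such an adjacency. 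So I would spell out that $v\ne v'$ (disjoint cycles), $e\ne e'$, and there is no edge $vv'$ realizing case (iii) of adjacency, because any edge from $V(C)$ to $V(C')$ would be a cut edge not incident to the relevant vertices $v_0,v_1,v_2$ selected under (A1)–(A3). The delicate point to get right is whether a single matching edge $e_v$ could join a vertex of $C$ to a vertex of $C'$; I would rule this out (or absorb it) using that such an edge would force one of the two cycles to be non-pendent or would contradict the DFS/cut-edge structure of $G_{\mathcal T}$.
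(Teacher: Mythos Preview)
Your approach diverges from the paper's and, as written, has a real gap.

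The paper does not count; it builds the triple explicitly. For the unique cycle (which has length at least $7$ since $|V(G)|\ge 7$), it deletes the at most two endpoints of $e^*$ from $C$, takes five consecutive survivors $y_0,\dots,y_4$, and observes that either $(y_0,y_1,y_2)$ or $(y_1,y_2,y_3)$ works, or else $y_0y_2,y_1y_3\in E(G)$ forces $(y_4,y_3,y_2)$ to work. For a non-root pendent cycle $C$, the paper \emph{always} sets $v_2=q$, the unique vertex of $C$ incident to the cut edge of $G_{\mathcal T}$, and takes $v_0,v_1$ to be the two predecessors of $q$ along $C$ (with a small adjustment in case~(iii) to avoid $e_{v_1}=e^*$). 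That single structural choice is what your sketch is missing: with $v_2=q$ fixed, (A3) is automatic (the endpoints of $e^*$ lie on or near the root cycle, not at the parent vertex of a non-root pendent cycle), and (A2) reduces to a local check.

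Your counting argument is not carried out, and it cannot be salvaged for short pendent cycles just by ``handling small cases by hand'' unless you rediscover the choice $v_2=q$. Two smaller points: the case ``$e_{v_0}$ and $e_{v_1}$ share an off-$C$ endpoint'' that you worry about is actually impossible, since such a common endpoint $w$ would then have $e_w=e_{v_0}=e_{v_1}$; and your claim that ``any edge from $V(C)$ to $V(C')$ would be a cut edge'' is false in general.

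For the ``moreover'' part your instinct about the DFS structure is the right one, but the precise reason is this: since $\mathcal T$ is a DFS tree of $\mathcal C(G;E_2)$, every edge of $\mathcal C(G;E_2)$ joins an ancestor--descendant pair; two distinct non-root leaves $C,C'$ are never in that relation, so there is \emph{no} edge of $G$ between $V(C)$ and $V(C')$ at all. Hence every $e\in\{e_{v_0},e_{v_1}\}$ is either a chord of $C$ or lands on a cycle that is not a non-root pendent cycle, which immediately gives non-adjacency of $I_C$ and $I_{C'}$. The paper states exactly this in one line; your version routes through an incorrect intermediate claim about cut edges.
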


\begin{proof}
First, suppose that $C$ is a unique cycle of $G_{\mathcal{T}}$. Then $C$ has length at least 7.
For the case (i), let $(v_0,v_1,v_2)=(x_4,x_5,x_6)$.
We suppose not the case (i).
Then $G$ has at most one pendent edge, say $e^*$.
Then $C$ minus the endpoints of $e^*$ has at least five vertices  $y_0$, $y_1$, \ldots, $y_4$ consecutive along $C$ so that none of them is not an endpoint of $e^*$.
If $y_0y_2\not\in E(G)$ or $y_1y_3\not\in E(G)$, then either $(y_0,y_1,y_2)$ or $(y_1,y_2,y_3)$ is a desired triple of vertices.
If $y_0y_2\in E(G)$ and  $y_1y_3\in E(G)$, then $(y_4,y_3,y_2)$ is a desired triple.

We suppose that $C$ is a non-root pendent cycle of $G_{\mathcal{T}}$. Note that $m\ge 1$.
Let $q$ be the vertex of $C$ which is incident to a cut edge of $G_\mathcal{T}$.
Suppose the case (iii). That is,  $C:x_1x_2\ldots,x_6$ is a 6-cycle such that  $x_2x_4,x_3x_5\in E(G)$ and $x_1$ is an endpoint of $e^*$.  We take $v_1$ a neighbor of $q$ on $C$, so that $e_{v_1}\neq e^*$. And then let $v_0$ be a neighbor of $v_1$ on $C$ such that $v_0\neq q$. Then $(v_0,v_1,q)$ is a desired triple.
If it is not the case (iii), then we take any triple $(v_0,v_1,q)$ of three vertices consecutive along $C$ (this is possible since $|C|\ge 3$).

The `moreover' part follows from the fact that $\mathcal{T}$ is a DFS-tree, since for a non-root pendent cycle $C$ with a triple $(v_0,v_1,v_2)$ obtained above, each edge $e \in\{ e_{v_0}, e_{v_1}\}$ is either a chord of $C$ or incident to a cycle $C'$ which is not a non-root pendent cycle.
\end{proof}

\begin{figure}[h!]
  \centering
  \includegraphics[width=12cm]{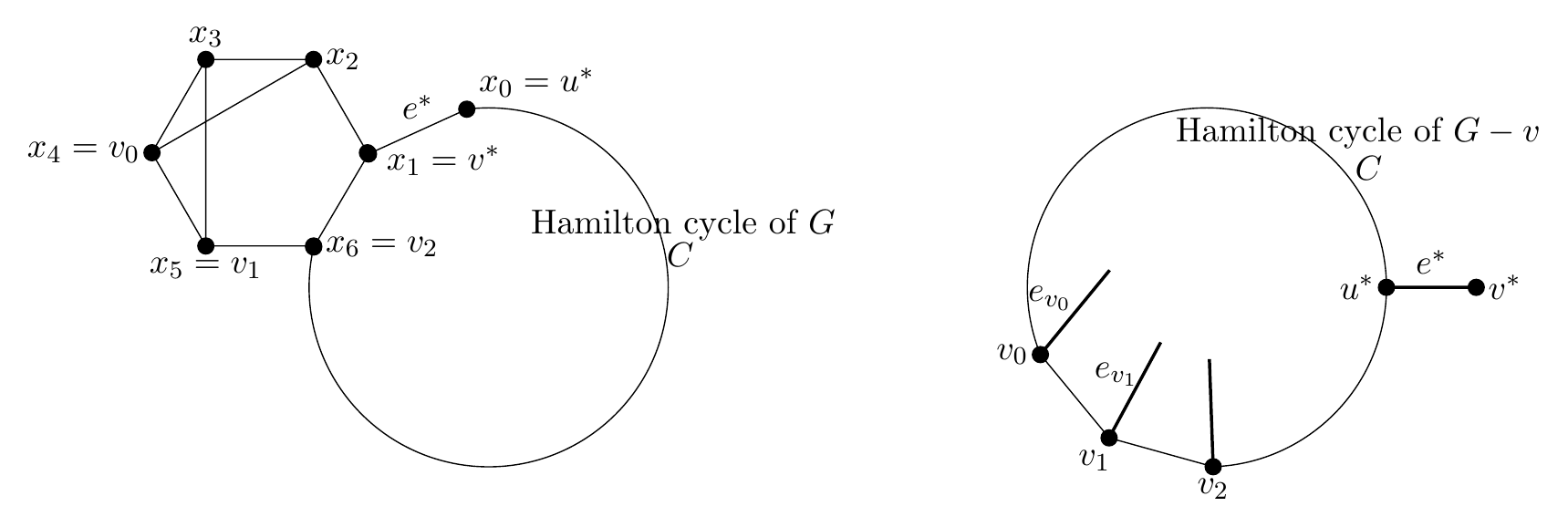}\\
  \caption{An illustration for some cases for the proof of Claim~\ref{claim:taking:triple}}\label{fig:Hamiltonian}
\end{figure}

Now we choose colors for the incidences by the following steps. We state the steps and explain brief reasons.
We let $C^{(0)}$, $\ldots$, $C^{(m)}$ be a DFS-ordering where $m\ge 0$.
Let $M$ be the set of the cut edges of $G_{\mathcal{T}}$, and let $e^*=u^*v^*$.

\begin{enumerate}[Step 1.]
\item We choose colors for the incidences on the edge $e^*$ (if such $e^*$ in the cases exists).
\item If $C$ is either a unique cycle or a non-root pendent cycle $C$ of $G_\mathcal{T}$, then letting $(v_0,v_1,v_2)$ be a triple found in Claim~\ref{claim:taking:triple}, we choose colors $\alpha$, $\beta$, $\gamma$, $\delta$, $\eta$ for the five incidences in $I_C$ (defined in Claim~\ref{claim:taking:triple}), so that they satisfy \eqref{condition:twoedges} in Lemma~\ref{lem:list}.
We note that the Step 2 is valid by Claim~\ref{claim:taking:triple}.
\item For case (i),  we choose colors for the incidences on the edges in $E_1 - (M\cup \{x_1x_6\})$.
For the other cases, we choose colors for the remaining incidences on the edges in $E_1-M$.
This step is valid by Lemma~\ref{lem:sub}, since there is some uncolored incident edge in each coloring step.
\item We take a cycle $C^{(i)}$ in $G_{\mathcal{T}}$ with the smallest index $i$ such that an incidence on some edge of $C^{(i)}$ is not colored yet.
    In the case where $C^{(i)}$ is a unique cycle or a non-root pendent cycle of $G_{\mathcal{T}}$,
    let $(v_0,v_1,v_2)$ be the triple found in Claim~\ref{claim:taking:triple}.
    In the following, except the incidences $(v_1,v_0v_1)$ and $(v_0,v_0v_1)$, existence of remained possible colors is guaranteed by Lemma~\ref{lem:sub}.
\begin{enumerate}
\item Suppose that $i=0$ and $m=0$. See the left figure of Figure~\ref{fig:Hamiltonian}.
    We choose colors for the incidences on the edges on the $(v_2,u^*)$-section of $C$ not containing $v^*$ one by one, starting from the edge $u^*x_{n-1}$. Then we choose colors
    for the incidences on the edge $x_1x_6$ and
    then for the incidence $(v_1,v_1v_2)$, and then for the incidences on the edges $v^*x_2$, $x_2x_3$, $x_3x_4$, one by one.
    We choose colors the incidences $(v_0,v_0v_1)$ and $(v_1,v_0v_1)$ on the edge $v_0v_1$.
Here, we note a brief reason why we can choose those colors.
Even though the second lastly chosen  incidence  $(v_0,v_0v_1)$  has 6 precolored adjacent incidences,  they are using at most five colors by \eqref{condition:twoedges} in Lemma~\ref{lem:list}, and so we still have at least one available color for the incidence $(v_0,v_0v_1)$.
Similarly,  the lastly chosen incidence $(v_1,v_0v_1)$ has 7 precolored adjacent incidences  using at most five colors by \eqref{condition:twoedges} in Lemma~\ref{lem:list}, and so we still have at least one available color for $(v_1,v_0v_1)$.

\item Suppose that $i=0$ and $m\ge 1$. Let $p=p(C^{(1)})$, and note that $p$ is a vertex of $C^{(0)}$.
If it is not the case (i), then we choose colors for the incidences on the edges of $C^{(0)}$ one by one along $C^{(0)}$, starting from the parent vertex $p$.
Suppose that it is the case (i).
Then we consider the path $C^{(0)}-e^*$, and consider the $(u^*,p)$-section $P_1$ and the $(v^*,p)$-section $P_2$ of $C^{(0)}-e^*$.
We choose colors for the incidences on the edges of $P_1$ one by one along $P_1$ starting from the vertex $u^*$, and choose colors for the incidences on the edges of $P_2$ one by one along $P_2$ starting from the vertex $v^*$.

\item Suppose that $C^{(i)}$ is a non-root, non-pendent cycle in $G_{\mathcal{T}}$.
For the parent vertex $p=p(C^{(i)})$ of $C^{(i)}$, first we choose colors for the incidences on $e_p$.
Note that, from the definition of a DFS-tree, there is a cycle $C^{(j)}$ in $G_{\mathcal{T}}$
 so that $j>i$, and let $p'=p(C^{(j)})$.
Then we choose colors for the incidences on the edges of $C^{(i)}$ one by one along $C^{(i)}$, starting from the vertex $p'$.

\item Suppose that $C^{(i)}$ is a non-root pendent cycle in  $G_{\mathcal{T}}$.
 Then we choose colors for the incidences on $e_p$ where $p=p(C^{(i)})$ first, and then choose
 a  color for the incidence $(v_1, v_1v_2)$, and then we determine the remaining incidences on the edges of $C^{(i)}$ one by one along $C^{(i)}$, starting from the vertex $v_2$ so that the incidences on the edge $v_0v_1$ are lastly chosen. Especially, the colors for $(v_1,v_0v_1)$ and $(v_0,v_0v_1)$ can be chosen by \eqref{condition:twoedges} in Lemma~\ref{lem:list} (similar to (a)).
 \end{enumerate}
\item Repeat Step  4 until we choose colors for all incidences of the cycles of $G_{\mathcal{T}}$.
\end{enumerate}
\end{proof}

\bibliographystyle{plain}
\bibliography{Incidence_cubic.bib}

 \end{document}